\theoremstyle{plain}
\newtheorem{theorem}{Theorem}[section]
\newtheorem{lemma}[theorem]{Lemma}
\newtheorem{proposition}[theorem]{Proposition}
\newtheorem{corollary}[theorem]{Corollary}
\theoremstyle{definition}
\theoremstyle{remark}
\newtheorem{remark}[theorem]{Remark}
\newtheorem{example}[theorem]{Example}
\newcommand{\abk}{\allowbreak}
\newcommand{\BH}{\mathrm{BH}}
\newcommand{\Cr}{\mathrm{C}}
\newcommand{\D}{\mathrm{D}}
\newcommand{\GH}{\mathrm{GH}}
\renewcommand{\mod}{\hspace{4pt}\mathrm{mod}\hspace{4pt}}
\begin{document}

\title{\textbf{\Large{Classifying cocyclic Butson Hadamard matrices}}}

\author{
\textsc{Ronan Egan}
				\thanks{\textit{E-mail: r.egan3@nuigalway.ie}}\\
\textit{\footnotesize{National University of Ireland, Galway, Ireland}}\\
\textsc{Dane Flannery}
                \thanks{\textit{E-mail: dane.flannery@nuigalway.ie}}\\
\textit{\footnotesize{National University of Ireland, Galway, Ireland}} \\
\textsc{Padraig \'O Cath\'ain}
				\thanks{\textit{E-mail: p.ocathain@gmail.com}}\\\
\textit{\footnotesize{Monash University, Victoria 3800, Australia}}
}

\footnotetext{
This is the final form of this work.
No other version has been or will be submitted elsewhere.
}

\maketitle
\begin{center}
\begin{abstract}
\noindent
We classify all the cocyclic Butson Hadamard matrices
$\mathrm{BH}(n,p)$ of order $n$ over the $p$th roots of unity for
an odd prime $p$ and $np\leq 100$. That is, we compile a list of
matrices such that any cocyclic $\BH(n,p)$ for these $n$, $p$ is
equivalent to exactly one element in the list. Our approach
encompasses non-existence results and computational machinery for
Butson and generalized Hadamard matrices that are of independent
interest.
\end{abstract}

\end{center}

\vspace{0.3cm}

\noindent
{\bf 2010 Mathematics Subject classification:} 05B20, 20B25, 20J06

\noindent
{\bf Keywords:} automorphism group, Butson Hadamard matrix, cocyclic, relative difference set

\clearpage

\section{Introduction} \label{Intr}
We present a new classification of Butson Hadamard matrices within
the framework of cocyclic design theory~\cite{deLF,Horadam}. New
non-existence results are also obtained. We extend {\sc
Magma}~\cite{Magma} and {\sf GAP}~\cite{GAP} procedures
implemented previously for $2$-cohomology and relative difference
sets~\cite{FlOB,POCRod,RDS} to determine the matrices and sort
them into equivalence classes.

Cocyclic development was introduced by de Launey and Horadam in
the 1990s, as a way of handling pairwise combinatorial designs
that exhibit a special symmetry. It has turned out to be a
powerful tool in the study of real Hadamard matrices (see
\cite{POCRod} for the most comprehensive classification). A basic
strategy, which we follow here, is to use algebraic and
cohomological techniques in systematically constructing the
designs.

Butson Hadamard matrices have applications in disparate areas such
as quantum physics and error-correcting codes. So lists of these
objects have value beyond design theory. We were motivated to
undertake the classification in this paper as a first step towards
augmenting the available data on complex Hadamard matrices (and we
did find several matrices not equivalent to any of those in the
online catalog \cite{CHLibrary}).

Specifically, we classify all Butson Hadamard matrices of order
$n$ over $p$th roots of unity for an odd prime $p$ and $np\leq
100$. The restriction to $p$th roots is a convenience that renders
each matrix generalized Hadamard over a cyclic group of order $p$;
for these we have a correspondence with central relative
difference sets that enables us to push the computation to larger
orders. It must be emphasized that most of the techniques that we
present apply with equal validity to generalized Hadamard matrices
over any abelian group---but are not valid for Butson Hadamard
matrices over $k$th roots of unity with $k$ composite. Moreover,
the tractability of the problem considered in this paper suggests
avenues for investigation of other cocyclic designs, such as
complex weighing matrices and orthogonal designs.

The paper is organized as follows. In Section~\ref{Background} we
set out background from design theory: key definitions, our
understanding of equivalence, and general non-existence results.
Section~\ref{EquivalenceTesting} is devoted to an explanation of
our algorithm to check whether two Butson Hadamard matrices are
equivalent. We recall the necessary essentials of cocyclic
development in Section~\ref{RehashCocyclic}. Then in
Section~\ref{CocyclicBH} we specialize to cocyclic Butson Hadamard
matrices. The full classification is outlined in
Section~\ref{FullClassification}. We end the paper with some
miscellaneous comments prompted by the classification.

For space reasons, the listing of matrices in our classification
is not given herein. It may be accessed at \cite{WWW}.

\section{Background}\label{Background}
Throughout, $p$ is a prime and $G$, $K$ are finite non-trivial
groups. We write $\zeta_k$ for $e^{2\pi {\rm i}/k}$.

\subsection{Butson and generalized Hadamard matrices}
A \textit{Butson Hadamard matrix of order $n$ and phase $k$},
denoted $\mathrm{BH}(n,k)$, is an $n\times n$ matrix $H$ with
entries in $\langle \zeta_k\rangle$ such that $HH^* = nI_n$ over
$\mathbb{C}$. Here $H^*$ is the usual Hermitian, i.e., complex
conjugate transpose.

For $n$ divisible by $|K|$, a \emph{generalized Hadamard matrix
$\GH(n,K)$ of order $n$ over $K$} is an $n\times n$ matrix
$H=[h_{ij}]$ whose entries $h_{ij}$ lie in $K$ and such that
\[
HH^*=nI_n+\frac{n}{|K|}\hspace*{.175mm} ({\textstyle \sum}_{x\in
K}x)(J_n-I_n)
\]
where $H^*=[h_{ji}^{-1}]$, $J_n$ is the all $1$s matrix, and the
matrix operations are performed over the group ring $\mathbb{Z}
K$.

The transpose of a $\BH(n,k)$ is a $\BH(n,k)$; the transpose of a
$\GH(n,K)$ is not necessarily a $\GH(n,K)$, except when $K$ is
abelian~\cite[Theorem 2.10.7] {deLF}. However, if $H$ is a Butson
or generalized Hadamard matrix then so too is $H^*$.

For the next couple of results, see Theorem~2.8.4 and Lemma 2.8.5
in \cite{deLF} (the former requires a theorem from
\cite{LamLeung}).
\begin{theorem}\label{Lamlike}
If there exists a $\BH(n,k)$, and $p_1,\dots,p_r$ are the primes
dividing $k$, then there exist $a_1,\dots,a_r\in \mathbb{N}$
such that $n=a_1p_1+\dots+a_rp_r$.
\end{theorem}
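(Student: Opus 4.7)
The plan is to reduce the statement to the Lam--Leung theorem on vanishing sums of roots of unity, which asserts that if a sum of $n$ (not necessarily distinct) $k$th roots of unity equals zero, then $n$ lies in the additive monoid $\mathbb{N} p_1 + \cdots + \mathbb{N} p_r$ generated by the prime divisors of $k$. Given this, the proof of Theorem~\ref{Lamlike} is essentially a one-line deduction, so the real content is setting up the appropriate vanishing sum.

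Concretely, let $H = [h_{ij}]$ be a $\BH(n,k)$ and pick any two distinct rows, say rows $1$ and $2$. The defining condition $HH^* = n I_n$ yields
\[
\sum_{j=1}^n h_{1j}\overline{h_{2j}} \;=\; 0.
\]
Each summand $h_{1j}\overline{h_{2j}}$ is a product of two $k$th roots of unity and hence is itself a $k$th root of unity. Thus we have a vanishing sum of exactly $n$ $k$th roots of unity, and Lam--Leung gives non-negative integers $a_1,\dots,a_r$ with $n = a_1 p_1 + \cdots + a_r p_r$.

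I would insert a brief remark to address the convention on $\mathbb{N}$: since $n \geq 2$ (as $H$ has at least two rows), at least one $a_i$ is strictly positive, so the assertion is non-vacuous regardless of whether $\mathbb{N}$ is taken to include zero.

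The only genuine obstacle is the appeal to Lam--Leung itself, which is a deep number-theoretic result (classifying vanishing sums of roots of unity of given weight) and is cited rather than reproved. Given that black box, the existence of two distinct rows with vanishing Hermitian inner product is immediate from the Butson condition, and the theorem follows with no further combinatorial input.
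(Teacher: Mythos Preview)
Your proof is correct and is exactly the argument the paper has in mind: the paper does not prove Theorem~\ref{Lamlike} directly but cites \cite[Theorem~2.8.4]{deLF}, noting explicitly that it ``requires a theorem from \cite{LamLeung}''---precisely the reduction you carry out by reading off a vanishing sum of $n$ $k$th roots of unity from the Hermitian inner product of two distinct rows. One small caution on your remark about $\mathbb{N}$: the Lam--Leung conclusion gives nonnegative integers $a_i$, and the theorem as stated must be read with $0\in\mathbb{N}$ (otherwise it is false already for the $\BH(2,6)$ given by $\left[\begin{smallmatrix}1&1\\1&-1\end{smallmatrix}\right]$, since $2<2+3$); your observation that some $a_i>0$ is fine, but it does not rescue the strictly-positive reading.
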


One consequence of Theorem~\ref{Lamlike} is that $\BH(n,p^t)$ can
exist only if $p  \hspace{1pt} | \hspace{1pt}  n$.

\begin{lemma}\label{WhenASumOfPthRootsIsZero}
Let $\omega$ be a primitive $p$th root of unity. Then
$\sum_{i=0}^n a_i\omega^i=0$ for $n<\abk p$ and $a_0, \ldots ,
a_n\in \mathbb{N}$ not all zero if and only if $n=p-1$ and $a_0 =
\cdots = a_n$.
\end{lemma}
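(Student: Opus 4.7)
The plan is to reduce the statement to the fact that the minimal polynomial of $\omega$ over $\mathbb{Q}$ is the cyclotomic polynomial $\Phi_p(x)=1+x+\cdots+x^{p-1}$, of degree $p-1$. The hypothesis just says that $\omega$ is a root of the integer polynomial $f(x)=\sum_{i=0}^n a_ix^i$, so $\Phi_p$ must divide $f$ in $\mathbb{Q}[x]$; together with the degree bound $\deg f\leq n<p$, this forces $f$ to be a rational multiple of $\Phi_p$, and reading off coefficients yields the conclusion. The reverse implication is immediate: if $n=p-1$ and $a_0=\cdots=a_{p-1}=a$, then $\sum a_i\omega^i=a\,\Phi_p(\omega)=0$.

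For the forward direction, suppose $f(\omega)=0$, $n<p$, and the $a_i\in\mathbb{N}$ are not all zero, so $f$ is a nonzero polynomial of degree at most $p-1$. Since $\Phi_p$ is the minimal polynomial of $\omega$ over $\mathbb{Q}$, we have $\Phi_p\mid f$ in $\mathbb{Q}[x]$, and the degree comparison $\deg f\leq p-1=\deg\Phi_p$ gives $f=c\Phi_p$ for some $c\in\mathbb{Q}\setminus\{0\}$. Matching coefficients yields $a_i=c$ for $0\leq i\leq p-1$; in particular $a_{p-1}=c\neq 0$, which combined with $n<p$ forces $n=p-1$. Since each $a_i\in\mathbb{N}$, the constant $c$ is a positive integer, and the proof concludes.

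The one point that must be invoked rather than proved is the irreducibility of $\Phi_p$ over $\mathbb{Q}$, which identifies it as the minimal polynomial of $\omega$; this is classical (a standard argument applies Eisenstein's criterion at $p$ to $\Phi_p(x+1)$), and I would simply cite it. No genuine obstacle remains.
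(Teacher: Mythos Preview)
Your argument is correct. The paper itself does not supply a proof of this lemma; it simply attributes the result to \cite[Lemma~2.8.5]{deLF}. Your reduction to the irreducibility of the $p$th cyclotomic polynomial $\Phi_p$ over $\mathbb{Q}$ is exactly the standard justification one would expect to find behind that citation, and every step is sound: the nonzero polynomial $f(x)=\sum_{i=0}^n a_ix^i$ of degree at most $n\leq p-1$ has $\omega$ as a root, hence is divisible by $\Phi_p$, forcing $\deg f=p-1$ (so $n=p-1$) and $f=c\,\Phi_p$ with $c=a_0=\cdots=a_{p-1}$ a positive integer.
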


Let $C=\langle x \rangle\cong \Cr_k$ and define $\eta_k:
\mathbb{Z} C \rightarrow \mathbb{Z}[\zeta_k]$ by
$\eta_k\big(\sum_{i=0}^{k-1}c_ix^i\big)=
\sum_{i=0}^{k-1}c_i\zeta_k^i$. The map $\eta_k$ extends to a ring
epimorphism $\mathrm{Mat}(n,\mathbb{Z} C) \rightarrow
\mathrm{Mat}(n,\mathbb{Z} [\zeta_k])$.
\begin{lemma}\label{OneWay}
If $M$ is a $\mathrm{GH}(n,\Cr_k)$ then $\eta_k(M)$ is a
$\mathrm{BH}(n,k)$; if $M$ is a $\BH(n, p)$ then $\eta_p^{-1}(M)$
is a $\mathrm{GH}(n,\Cr_p)$.
\end{lemma}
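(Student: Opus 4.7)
The plan is to treat the two implications separately, using that $\eta_k$ is a ring homomorphism (already established in the paragraph introducing it) and is compatible with the $*$ operation: $\eta_k(M^*)=\eta_k(M)^*$, which follows from $\eta_k(x^{-i})=\zeta_k^{-i}=\overline{\zeta_k^i}$.

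For the first implication, I would apply $\eta_k$ entrywise to the defining identity $MM^*=nI_n+\frac{n}{k}(\sum_{x\in C}x)(J_n-I_n)$. The crucial collapse is $\eta_k(\sum_{x\in C}x)=\sum_{i=0}^{k-1}\zeta_k^i=0$, so the second term on the right vanishes and we are left with $\eta_k(M)\eta_k(M)^*=nI_n$. Since the entries of $M$ are elements of $C$, their images under $\eta_k$ lie in $\langle\zeta_k\rangle$, so $\eta_k(M)$ is a $\BH(n,k)$.

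For the converse, the first step is to observe that $\eta_p$ restricts to a bijection between $C\cong\Cr_p$ and the set of $p$th roots of unity, so $N:=\eta_p^{-1}(M)$ is an unambiguously defined matrix with entries in $C$. The diagonal entries of $NN^*$ are each $n$. For an off-diagonal entry indexed by distinct rows $r,s$, I would write $\sum_{\ell=1}^n m_{r\ell}m_{s\ell}^{-1}=\sum_{i=0}^{p-1}c_ix^i\in\mathbb{Z}C$ with $c_i\in\mathbb{N}$ and $\sum_i c_i=n$. Applying $\eta_p$ returns the corresponding entry of $MM^*$, which is $0$. Lemma~\ref{WhenASumOfPthRootsIsZero} then forces $c_0=\cdots=c_{p-1}$; since they sum to $n$, each equals $n/p$. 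The off-diagonal entry of $NN^*$ is therefore $\frac{n}{p}\sum_{x\in C}x$, and $N$ is a $\GH(n,\Cr_p)$.

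The main obstacle will be the converse direction, which is precisely where restricting to a prime $p$ becomes essential: without Lemma~\ref{WhenASumOfPthRootsIsZero}, an analogue of which fails in $\mathbb{Z}[\zeta_k]$ for composite $k$, one cannot recover the coefficient pattern of a vanishing $\mathbb{Z}C$-sum from its image, and a matrix-valued preimage of a $\BH$ need not satisfy the $\GH$ identity.
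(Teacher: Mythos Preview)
Your proof is correct and follows exactly the route the paper indicates: the first implication is the ``easy'' direction (applying the ring homomorphism $\eta_k$ and using $\sum_{i=0}^{k-1}\zeta_k^i=0$), and the second is precisely the appeal to Lemma~\ref{WhenASumOfPthRootsIsZero} that the paper cites. You have simply filled in the details the authors left implicit.
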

\begin{proof}
The first part is easy, and the second uses
Lemma~\ref{WhenASumOfPthRootsIsZero}. \qedhere
\end{proof}

Thus, a $\BH(n,p)$ is the same design as a $\GH(n,\Cr_p)$.
Butson's seminal paper~\cite{Butson} supplies a construction of
$\BH(2^ap^b,p)$ for $0\leq a \leq \abk b$.
\begin{example}
For composite $n$, the Fourier matrix (more properly, Discrete
Fourier Transform matrix) of order $n$ is a $\BH(n,n)$ but not a
$\GH(n,\Cr_n)$.
\end{example}

\begin{example}
There are no known examples of $\GH(n,K)$  when $K$ is not a
$p$-group. Indeed, finding a $\GH(n, K)$ with $|K|=n$ not a power
of $p$ would resolve a long-standing open problem in finite
geometry; namely, whether a finite projective plane always has
prime-power order.
\end{example}

\subsection{Equivalence relations}
Let $X$, $Y$ be $\GH(n,K)$s. We say that $X$ and $Y$ are
\emph{equivalent} if $MXN = Y$ for monomial matrices $M$, $N$ with
non-zero entries in $K$. If $X$, $Y$ are $\BH(n,k)$s then they are
equivalent if $MXN = Y$ for monomials $M$, $N$ with non-zero
entries from  $\langle \zeta_k\rangle$. Equivalence in either
situation is denoted $X\approx Y$, whereas if $M$, $N$ are
permutation matrices then $X$, $Y$ are \emph{permutation
equivalent} and we write $X\sim Y$. The equivalence operations
defined above are \emph{local}, insofar as they are applied
entrywise to a single row or column one at a time. We will not
regard taking the transpose or Hermitian as equivalence
operations.

If $H$ is a $\GH(n,K)$ then $H\approx H'$ where $H'$ is normalized
(its first row and column are all $1$s) and thus
\emph{row-balanced}: each element of $K$ appears with the same
frequency, $n/|K|$, in each non-initial row. Similarly, $H'$ is
column-balanced. Unless $k$ is prime, neither property is
necessarily held by a normalized $\BH(n,k)$.

\subsection{Non-existence of generalized Hadamard matrices}
Certain number-theoretic conditions exclude various odd $n$ as the
order of a generalized Hadamard matrix; see, e.g.,
\cite{CookeHeng,deL84b,Winterhof}. The main general result of this
kind that we need is due to de Launey~\cite{deL84b}.
\begin{theorem}
\label{deLSquareFree} Let $K$ be abelian, and $r$, $n$ be odd,
where $r$ is a prime dividing $|K|$. If a $\GH(n,K)$ exists then
every integer $m \not\equiv 0 \mod r$ that divides the square-free
part of $n$ has odd multiplicative order modulo $r$.
\end{theorem}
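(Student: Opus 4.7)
The plan is to reduce the problem to a determinantal identity in the cyclotomic ring $\mathbb{Z}[\zeta_r]$, then analyze its factorization one rational prime at a time. Since $K$ is abelian and $r$ divides $|K|$, there is a character $\chi\colon K \to \langle \zeta_r\rangle$ of order $r$. Applying $\chi$ entrywise to a putative $\GH(n,K)$ matrix $H$ produces $M := \chi(H)$ with entries in $\langle \zeta_r\rangle$. Because $\chi$ is non-trivial, $\chi(\sum_{x\in K} x)=0$, so the defining equation for $H$ collapses to $MM^{*}=nI_{n}$ (equivalently, $M$ is a $\BH(n,r)$, as in Lemma~\ref{OneWay}). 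Taking determinants yields $|\det M|^{2}=n^{n}$, and since $\det M\in\mathbb{Z}[\zeta_r]$ we obtain the ideal equation
\[
(\det M)(\overline{\det M}) \;=\; (n)^{n}
\]
in the Dedekind domain $\mathbb{Z}[\zeta_r]$.

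The next step is to examine this equation one prime at a time. Fix a rational prime $m$ dividing $n$ with $m\neq r$; then $m$ is unramified in $\mathbb{Z}[\zeta_r]$, so $(m)=\mathfrak{p}_{1}\cdots\mathfrak{p}_{g}$ with common residue degree $f=\mathrm{ord}_{r}(m)$ and $fg=r-1$. Setting $a_{i}=v_{\mathfrak{p}_{i}}(\det M)$, the displayed equation forces
\[
a_{i}+v_{\mathfrak{p}_{i}}(\overline{\det M})\;=\;n\cdot v_{m}(n)
\]
for every $i$. Assume, for contradiction, that $m$ is a prime divisor of the square-free part of $n$ (so $v_{m}(n)$ is odd) while $\mathrm{ord}_{r}(m)$ is even. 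The Galois group $\mathrm{Gal}(\mathbb{Q}(\zeta_r)/\mathbb{Q})\cong(\mathbb{Z}/r\mathbb{Z})^{*}$ is cyclic of even order $r-1$, complex conjugation corresponds to $-1$, and the decomposition group of each $\mathfrak{p}_{i}$ over $m$ is the cyclic subgroup $\langle m\rangle$. Since any even-order subgroup of a cyclic group of even order must contain the unique element of order two, $-1\in\langle m\rangle$; hence complex conjugation fixes each $\mathfrak{p}_{i}$, so $v_{\mathfrak{p}_{i}}(\overline{\det M})=a_{i}$, giving $2a_{i}=n\cdot v_{m}(n)$. This is impossible because $n$ and $v_{m}(n)$ are both odd.

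The theorem for composite $m$ is then immediate from the prime case: the odd-order elements of the cyclic group $(\mathbb{Z}/r\mathbb{Z})^{*}$ form a subgroup, so if every prime factor of $m$ has odd order modulo $r$ then so does $m$. I expect the main obstacle to be the decomposition-group step: one must verify that in the abelian extension $\mathbb{Q}(\zeta_r)/\mathbb{Q}$ the decomposition group at any prime above $m$ is generated by the Frobenius $m\in(\mathbb{Z}/r\mathbb{Z})^{*}$, and that a Galois element fixes a prime above $m$ precisely when it lies in this subgroup. Granted this piece of standard algebraic number theory, the rest is bookkeeping with ideal valuations.
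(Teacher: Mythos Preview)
The paper does not prove this theorem; it is quoted from de Launey~\cite{deL84b} without argument. Your proposal is correct and is essentially the classical determinant--character argument that underlies de Launey's result: pass to a $\BH(n,r)$ via a non-trivial character of $K$, take the norm equation $(\det M)(\overline{\det M})=(n)^{n}$ in $\mathbb{Z}[\zeta_r]$, and exploit the fact that when $\mathrm{ord}_{r}(m)$ is even the decomposition group at any prime above $m$ contains complex conjugation, forcing $2\mid n\cdot v_{m}(n)$. The reduction from arbitrary $m$ to prime $m$ via the odd-order subgroup of $(\mathbb{Z}/r\mathbb{Z})^{*}$ is also fine. There is nothing to compare against in the present paper, but your argument matches the standard one in the literature.
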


\begin{remark}\label{deL84Cor}
$\BH(n,p)$ do not exist for $(n,p) \in \{ (15,3), (33,3), (15,5)
\}$.
\end{remark}

We shall derive non-existence conditions for cocyclic $\BH(n,p)$
later.

\section{Deciding equivalence of Butson Hadamard matrices}
\label{EquivalenceTesting}

In this section we give an algorithm to decide equivalence of
Butson Hadamard matrices. The problem is reduced to deciding graph
isomorphism, which we carry out using \textit{Nauty}~\cite{nauty};
and subgroup conjugacy and intersection problems, routines for
which are available in \textsc{Magma}.

\subsection{Automorphism groups, the expanded design, and the
associated design}\label{AutExpAssoc}

The direct product $\mathrm{Mon}(n,\langle \zeta_k\rangle)\times
\mathrm{Mon}(n,\langle \zeta_k\rangle)$ of monomial matrix groups
acts on the (presumably non-empty) set of $\BH(n,k)$ via $(M,N)H =
MHN^*$. The orbit of $H$ is its equivalence class; the stabilizer
is its full \emph{automorphism group} $\mathrm{Aut}(H)$.
\begin{example}\label{Drakeetc}
(\cite[Section 9.2]{deLF}.) Denote the $r$-dimensional
$\mathrm{GF}(p)$-space by $V$. Then $D=[\hspace{1pt}
xy^{\!\top}\hspace{.5pt}]_{x,y\in V}$ is a $\GH(p^{r},\Cr_p)$,
written additively. In fact $D$ is the $r$-fold Kronecker product
of the Fourier matrix of order $p$ (so when $p=2$ we get the
Sylvester matrix). If $r\neq 1$ or $p>2$ then
$\mathrm{Aut}(D)\cong (\Cr_p \times \Cr_p^{r}) \rtimes
\mathrm{A\hspace*{-.16mm} GL}(r,p)$.
\end{example}

Let $\mathrm{Perm}(n)$ be the group of all $n\times n$ permutation
matrices. The \emph{permutation automorphism group}
$\mathrm{PAut}(X)$ of an $n\times n$ array $X$ consists of all
pairs $(P,Q)\in \mathrm{Perm}(n)^2$ such that $PXQ^\top = X$.
Clearly $\mathrm{PAut}(H)\leq \mathrm{Aut}(H)$. The array $X$ is
\emph{group-developed} over a group $G$ of order $n$ if $X \sim
[h(xy)]_{x,y\in G}$ for some map $h$. We readily prove that $X$ is
group-developed over $G$ if and only if $G$ is isomorphic to a
regular subgroup (i.e., subgroup acting regularly in its induced
actions on the sets of row and column indices) of
$\mathrm{PAut}(X)$.

The full automorphism group $\mathrm{Aut}(H)$ has no direct
actions on rows or columns of $H$. Rather, it acts on the
\emph{expanded design} $\mathcal{E}_H= [\zeta_k^{i+j}H]$ via a
certain isomorphism $\Theta$ of $\mathrm{Aut}(H)$ onto
$\mathrm{PAut}(\mathcal{E}_H)$: see~\cite[Theorem~9.6.12]{deLF}.
\begin{proposition}[Corollary 9.6.10, \cite{deLF}]
\label{LambdaEquivalenceImplies} If $H_1$ and $H_2$ are equivalent
$\BH(n,k)$s then $\mathcal{E}_{H_1}\sim \mathcal{E}_{H_2}$;
therefore $\mathrm{PAut}(\mathcal{E}_{H_1})$ and
$\mathrm{PAut}(\mathcal{E}_{H_2})$ are isomorphic as conjugate
subgroups of $\, \mathrm{Perm}(nk)^2$.
\end{proposition}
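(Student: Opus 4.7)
The plan is to decompose the equivalence $H_1\approx H_2$ into a product of local generators and, for each generator, exhibit an explicit permutation of the row- or column-indices of the expanded design that realises it. Index the rows and columns of $\mathcal{E}_H$ by pairs $(i,r)$ and $(j,c)$ respectively, with $i,j\in\{0,\dots,k-1\}$ and $r,c\in\{1,\dots,n\}$, so that the $\bigl((i,r),(j,c)\bigr)$ entry of $\mathcal{E}_H$ equals $\zeta_k^{i+j}H_{rc}$. The relation $\approx$ is generated by (i) permutations of the rows or columns of $H$, and (ii) multiplication of a single row or column of $H$ by a power of $\zeta_k$.

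First I would handle each generator in turn. A row permutation $\sigma$ of $H$ is realised by the permutation of $\mathcal{E}_H$ sending row-index $(i,r)\mapsto(i,\sigma(r))$ and fixing column-indices. Multiplication of row $r_0$ by $\zeta_k^a$ is realised by the permutation sending $(i,r_0)\mapsto(i-a,r_0)$ cyclically for each $i$ and fixing $(i,r)$ for $r\neq r_0$: indeed, the $\bigl((i,r_0),(j,c)\bigr)$ entry of the new expanded design is $\zeta_k^{i+j+a}H_{r_0 c}$, which coincides with the $\bigl((i+a,r_0),(j,c)\bigr)$ entry of $\mathcal{E}_H$. Column generators are treated symmetrically. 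Composing these permutations along any factorisation of $M H_1 N = H_2$ into generators yields $P,Q\in\mathrm{Perm}(nk)$ with $P\mathcal{E}_{H_1}Q^{\top}=\mathcal{E}_{H_2}$, which gives the first assertion.

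For the second assertion, a pair $(A,B)\in\mathrm{Perm}(nk)^{2}$ lies in $\mathrm{PAut}(\mathcal{E}_{H_2})$ if and only if $A P \mathcal{E}_{H_1} Q^{\top} B^{\top}=P\mathcal{E}_{H_1}Q^{\top}$, equivalently $(P^{-1}AP)\mathcal{E}_{H_1}(Q^{-1}BQ)^{\top}=\mathcal{E}_{H_1}$. Hence conjugation by $(P,Q)$ inside $\mathrm{Perm}(nk)^{2}$ carries $\mathrm{PAut}(\mathcal{E}_{H_2})$ isomorphically onto $\mathrm{PAut}(\mathcal{E}_{H_1})$.

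The main obstacle is bookkeeping---care is required with the direction of the cyclic shift (the $i$-coordinate shifts by $-a$, not $+a$, because the entry depends on $i+j$) and with distinguishing the outer permutation of block-indices from the inner permutation within a block. Nothing deeper is involved; in particular, the converse direction, which would require the isomorphism $\Theta$ of~\cite[Theorem~9.6.12]{deLF}, is not needed for this corollary.
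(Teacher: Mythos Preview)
Your argument is correct. The decomposition of a monomial equivalence into row/column permutations and single row/column scalings, together with your explicit block-permutations on $\mathcal{E}_H$, does exactly what is needed; the sign of the cyclic shift is handled correctly, and the conjugation step for the automorphism groups is standard.

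As for comparison: the paper does not supply its own proof of this proposition. It is stated as a citation of \cite[Corollary~9.6.10]{deLF} and used as a black box. The machinery in \cite{deLF} proceeds via the embedding $\Theta:\mathrm{Mon}(n,\langle\zeta_k\rangle)^2\rightarrow\mathrm{Perm}(nk)^2$ described just after the proposition (and invoked in the proof of Proposition~\ref{AssocEquiv}); your argument is effectively a hands-on unpacking of what $\Theta$ does on generators, so the two routes coincide in substance. Your self-contained version has the virtue of making clear that nothing beyond elementary bookkeeping is required for this direction, whereas the cited framework is set up to handle the converse and the isomorphism $\mathrm{Aut}(H)\cong\mathrm{PAut}(\mathcal{E}_H)$ uniformly.
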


A converse of Proposition~\ref{LambdaEquivalenceImplies} also
holds, which we might use as a criterion to distinguish Butson
Hadamard matrices. For computational purposes it is preferable to
work with the $(0,1)$-matrix $\mathrm{A}_{H}$ (the
\emph{associated design} of $H$) obtained from $\mathcal{E}_H$ by
setting its non-identity entries to zero. Then we need an analog
of Proposition~\ref{LambdaEquivalenceImplies} for the associated
design. Before stating this, we say a bit more about the embedding
$\Theta:\abk \mathrm{Mon}(n,\langle \zeta_k\rangle)^2\rightarrow
\mathrm{Perm}(nk)^2$. It maps $(P,Q)$ to $(\theta^{(1)}(P),
\theta^{(2)}(Q))$ where $\theta^{(1)}$ (resp.~$\theta^{(2)}$)
replaces each non-zero entry by the permutation matrix
representing that entry in the right (resp.~left)  regular action
of $\langle \zeta_k\rangle$ on itself. Denote the image of
$\mathrm{Mon}(n,\langle \zeta_k\rangle)^2$ under $\Theta$ by
$M(n,k)$.
\begin{proposition}\label{AssocEquiv}
Let $H_1$, $H_2$ be $\BH(n,k)$s. We have $H_1\approx H_2$ if and
only if $\mathrm{A}_{H_1} = \abk X\mathrm{A}_{H_2}Y^\top$ for some
$(X, Y)\in M(n,k)$.
\end{proposition}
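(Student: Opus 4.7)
The plan is to use Proposition \ref{LambdaEquivalenceImplies} (and the bijectivity of the embedding $\Theta$) together with the reconstructibility of $\mathcal{E}_H$ from $\mathrm{A}_H$ plus the canonical block labelling coming from $\Theta$.

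For the forward direction, if $H_1\approx H_2$ then by Proposition \ref{LambdaEquivalenceImplies} there is a pair $(X,Y)\in M(n,k)$ with $X\mathcal{E}_{H_2}Y^\top=\mathcal{E}_{H_1}$; here I would use not just the weak conclusion $\mathcal{E}_{H_1}\sim\mathcal{E}_{H_2}$ but the fact that the witnessing permutation pair is supplied explicitly by $\Theta$. Since $X$ and $Y$ are permutation matrices, the map $A\mapsto XAY^\top$ merely rearranges entries, so $(XAY^\top)_{ij}=1$ iff $A_{\sigma(i),\tau(j)}=1$ for the permutations $\sigma,\tau$ coded by $X,Y$. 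Restricting the equation $X\mathcal{E}_{H_2}Y^\top=\mathcal{E}_{H_1}$ to $(0,1)$-supports of $1$-entries therefore yields $X\mathrm{A}_{H_2}Y^\top=\mathrm{A}_{H_1}$, as required.

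For the reverse direction, suppose $\mathrm{A}_{H_1}=X\mathrm{A}_{H_2}Y^\top$ with $(X,Y)=\Theta(P,Q)\in M(n,k)$. The central claim I would establish is that $\mathcal{E}_H$ is recoverable from $\mathrm{A}_H$ once one retains the canonical partitioning of rows and columns into $k$-blocks: in the block $\zeta_k^{i+j}H$ the position of each $1$-entry identifies the corresponding entry $\zeta_k^{-(i+j)}$ of $H$, so $\mathrm{A}_H$ together with the block labelling determines $H$, hence $\mathcal{E}_H$. A direct inspection of the formulas for $\theta^{(1)}$ and $\theta^{(2)}$ then shows that every element of $M(n,k)$ acts on expanded designs by permuting blocks and by permuting positions within blocks, so $X\mathcal{E}_{H_2}Y^\top$ is itself an expanded design $\mathcal{E}_{H'}$ for some $\BH(n,k)$ matrix $H'$. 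Its $1$-support is $X\mathrm{A}_{H_2}Y^\top=\mathrm{A}_{H_1}$, matching the $1$-support of $\mathcal{E}_{H_1}$, and the block structures agree, so $\mathcal{E}_{H'}=\mathcal{E}_{H_1}$ and $H'=H_1$. Pulling the resulting equality $X\mathcal{E}_{H_2}Y^\top=\mathcal{E}_{H_1}$ back through $\Theta^{-1}$ produces a monomial equivalence from $H_2$ to $H_1$, i.e., $H_1\approx H_2$.

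The main obstacle is the bookkeeping in the reverse direction: namely, verifying that elements of $M(n,k)$ really do carry expanded designs to expanded designs (and respect the canonical block labelling), and that $\mathrm{A}_H$ plus this labelling determines $\mathcal{E}_H$ uniquely. Both are explicit combinatorial checks against the definitions of $\theta^{(1)}$ and $\theta^{(2)}$ via the regular representations of $\langle\zeta_k\rangle$; once they are in hand, the proposition is essentially Proposition \ref{LambdaEquivalenceImplies} together with its converse under $\Theta^{-1}$.
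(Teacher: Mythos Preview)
Your proposal is correct and follows essentially the same route as the paper's proof: both argue that an $M(n,k)$-equivalence of associated designs lifts to one of expanded designs, and hence (via $\Theta^{-1}$) to a monomial equivalence of the original matrices. The paper organizes the bookkeeping through the decomposition $\mathcal{E}_{H_i}=\sum_{r\in\langle\zeta_k\rangle} r\,H_{i,r}$ and cites Theorem~9.6.7, Lemma~9.8.3, and Lemma~9.6.8 of \cite{deLF} to pass from the $r=1$ layer $\mathrm{A}_{H_i}=H_{i,1}$ to all layers, whereas you propose carrying out the same verification directly from the block labelling and the regular-representation form of $\theta^{(1)},\theta^{(2)}$; the underlying content is identical.
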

\begin{proof}
Suppose that $\theta^{(1)}(P)\mathrm{A}_{H_2}\theta^{(2)}(Q)^\top
= \mathrm{A}_{H_1}$, and write
$\mathcal{E}_{H_i}=\sum_{r\in\langle \zeta_k\rangle} rH_{i,r}$ (so
$\mathrm{A}_{H_i}=H_{i,1})$. By Theorem~9.6.7 and Lemma~9.8.3 of
\cite{deLF},
\[
H_{1,r} = \abk \theta^{(1)}(P)\abk H_{2,r}\theta^{(2)}(Q)^\top.
\]
Therefore $\mathcal{E}_{H_1} = \mathcal{E}_{PH_2Q^*}$ by
\cite[Lemma 9.6.8]{deLF}. This implies that $H_1=PH_2Q^*$. \qedhere
\end{proof}

We also use the following simple fact.
\begin{lemma}\label{Cosets}
Let $A$, $B$ be subgroups and $x$, $y$ be elements of a group $G$.
Then either $xA \cap yB=\emptyset$, or $xA\cap yB = g(A\cap B)$
for some $g\in G$.
\end{lemma}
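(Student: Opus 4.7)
The plan is to handle the two cases of the disjunction separately; the first is trivial and the second reduces to the standard observation that a coset is determined by any one of its elements. Specifically, if the intersection $xA \cap yB$ is empty, there is nothing to prove, so I would assume the intersection is non-empty and pick any element $g \in xA \cap yB$.

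From $g \in xA$, the coset equality $gA = xA$ follows, because both contain $g$ and cosets of $A$ partition $G$. Similarly $gB = yB$. Substituting, the intersection becomes $gA \cap gB$. I would then peel off the common left translate: an element lies in $gA \cap gB$ if and only if it has the form $ga = gb$ for some $a \in A$ and $b \in B$, and since left multiplication by $g$ is a bijection on $G$, this forces $a = b \in A \cap B$. Hence $gA \cap gB = g(A \cap B)$, completing the proof.

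There is no real obstacle here; the only thing to be careful about is the direction of multiplication (these are left cosets, so the argument uses left cancellation, which works in any group without any abelian or normality hypothesis). The statement and proof are symmetric, so the same argument also gives the analogous fact for right cosets if needed elsewhere.
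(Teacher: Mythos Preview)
Your proof is correct and is the standard argument. The paper itself does not give a proof of this lemma; it merely states it as a ``simple fact,'' so there is nothing to compare against beyond noting that your argument is exactly the expected one.
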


We now state our algorithm to decide equivalence of Butson
Hadamard matrices $H_1$ and $H_2$ of order $n$ and phase $k$.

\begin{enumerate}
\item Compute $G_{1} = \mathrm{PAut}(\mathrm{A}_{H_{1}})$ with
\textit{Nauty}. \item Attempt to find $\sigma\in
\mathrm{Perm}(nk)^2$ such that $\sigma
\mathrm{A}_{H_1}=\mathrm{A}_{H_2}$.

If no such $\sigma$ exists then return $\tt{false}$. \item Compute
$U = G_{1} \cap M(n,k)$ and a transversal $T$ for $U$ in $G_{1}$.
\item If there exists $t \in T$ such that $\sigma t \in M(n,k)$
then return $\tt{true}$;

else return $\tt{false}$.
\end{enumerate}

\noindent If $H_1\approx H_2$ then $\sigma G_1\cap \abk M(n,k)\neq
\emptyset$ by Proposition~\ref{AssocEquiv}, so by
Lemma~\ref{Cosets} we must find a $t$ as in step 4. A report of
$\tt false$ is then correct by Proposition~\ref{AssocEquiv}; a
report of $\tt true$ is clearly correct. Note that if the
algorithm returns $\tt true$ then we find an element
$\Theta^{-1}(\sigma t)$ mapping $H_1$ to $H_2$.

Step 1 is a potential bottleneck, although it remains feasible for
graphs with several hundred vertices. Equivalence testing is
therefore practicable for many $\BH(n,k)$ that have been
considered in the literature.
\begin{example}
The authors of \cite{McNW} construct a series of $\BH(2p,p)$ but
cannot decide whether their matrices are equivalent to those of
Butson~\cite[Theorem 3.5]{Butson}. Our method, which has been
implemented in \textsc{Magma}, shows that the $\BH(10,5)$ denoted
$S_{10}$ in \cite{McNW} is equivalent to Butson's matrix in less
than $0.1$s (an explicit equivalence is given at \cite{WWW}).
\end{example}

\section{Cocyclic development}\label{RehashCocyclic}
Since our main concern is Butson Hadamard matrices, we recap the
essential ideas of cocyclic development solely for this type of
design.

\subsection{Second cohomology and designs}
Let $H$ be a $\BH(n,k)$, and let $W$ be the $k\times k$ block
circulant matrix with first row $(0_n, \abk \ldots , \abk 0_n ,
I_n)$. A regular subgroup of $\mathrm{PAut}(\mathcal{E}_H)$
containing the central element $(W^\top,W)$ is \emph{centrally
regular}. By \cite[Theorem 14.7.1]{deLF},
$\mathrm{PAut}(\mathcal{E}_H)$ has a centrally regular subgroup if
and only if $H\approx [\psi(x,y)]_{x, y \in G}$ for some $G$ and
cocycle $\psi:\abk G\times G \rightarrow \abk \langle
\zeta_k\rangle$; i.e., $\psi(x,y)\psi(xy,z)=\psi(x,yz)\psi(y,z)$
$\forall \ x, \abk y, \abk z \in G$. We say that $H\approx
[\psi(x,y)]_{x, y \in G}$ is \emph{cocyclic}, with \emph{indexing
group $G$ and cocycle $\psi$}. A cocycle of $H$ is
\emph{orthogonal}.

Let $U$ be a finite abelian group and denote the group of all
cocycles $\psi:G\times G \rightarrow\abk  U$ by $Z(G,U)$. Our
cocycles are normalized, meaning that $\psi(x,y) = 1$ when $x$ or
$y$ is $1$. If $\phi:G\rightarrow U$ is a normalized map then
$\partial \phi\in Z(G,U)$ defined by $\partial
\phi(x,y)=\phi(x)^{-1}\phi(y)^{-1}\phi(xy)$ is a
\emph{coboundary}. These form a subgroup $B(G,U)$ of $Z(G,U)$, and
$H(G,U)=Z(G,U)/B(G,U)$ is the \emph{second cohomology group of
$G$}.

For each $\psi \in Z(G,U)$, the central extension $E(\psi)$ of $U$
by $G$ is the group with elements $\{ (g,u) \hspace{2.5pt} |
\hspace{2.5pt} \abk g\in G, \, u\in U\}$ and multiplication given
by $(g_1,u_1)(g_2,u_2)=(g_1g_2, \abk u_1u_2\psi(g_1,g_2))$.
Conversely, let  $E$ be a central extension of $U$ by $G$, with
embedding $\iota : U \rightarrow E$ and epimorphism $\pi : E
\rightarrow G$ satisfying $\ker \, \pi = \abk \iota (U)$. Choose a
normalized map $\tau: G\rightarrow E$ such that $\pi\tau =
\mathrm{id}_G$. Then $\psi_\tau (x,y)
=\iota^{-1}(\tau(x)\tau(y)\tau(xy)^{-1})$ defines a cocycle
$\psi_\tau$, and $E(\psi_\tau)\cong E$. Different choices of right
inverse $\tau$ of $\pi$ do not alter the cohomology class of
$\psi_\tau$.

A $\BH(n,k)$, $H$, is cocyclic with cocycle $\psi$ if and only if
$E(\psi)$ is isomorphic to a centrally regular subgroup of
$\mathrm{PAut}(\mathcal{E}_H)$ by an isomorphism mapping
$(1,\zeta_k)$ to $(W^\top,W)$. If $H$ is group-developed over $G$
then $H$ is equivalent to a cocyclic $\BH(n,k)$ with cocycle
$\psi\in B(G, \langle \zeta_k\rangle)$ and extension group
$E(\psi)\cong  G\times \Cr_k$.
\begin{example}\label{DrakeetcContinued}
The Butson Hadamard matrix $D$ in Example~\ref{Drakeetc} is
cocyclic, with indexing group $\Cr_p^r$ and cocycle $\psi\not \in
B(\Cr_p^r,\Cr_p)$ defined by $\psi(x,y)=xy^\top$. Note that $\psi$
is multiplicative and symmetric. If $p$ is odd then $E(\psi) \cong
\abk \Cr_p^{r+1}$. The determination of all cocycles, indexing
groups, and extension groups of $D$ would be an interesting
exercise; cf.~the account for $p=2$ in \cite[Chapter~21]{deLF}.
\end{example}

\subsection{Computing cocycles}\label{CCocycles}
We compute $Z(G,\langle \zeta_k\rangle)$ by means of the Universal
Coefficient theorem:
\[
H(G,U)= I(G,U)/B(G,U) \times T(G,U)/B(G,U)
\]
where $T(G,U)/B(G,U) \cong \mathrm{Hom}(H_2(G),U)$ and 
$I(G,U)/B(G,U)$ is the isomorphic image under inflation of
$\mathrm{Ext}(G/G',U)$. Here $G'=[G,G]$ and $H_2(G)$ is the Schur
multiplier of $G$.

We describe the calculation of $I(G,U)$ for $U=\abk \langle u
\rangle\cong \Cr_p$ as this is used in a later proof. Let
$\prod_i\langle g_iG'\rangle$ be the Sylow $p$-subgroup of $G/G'$,
where $|g_iG'|=p^{e_i}$. Define $M_i$ to be the $p^{e_i}\times
p^{e_i}$ matrix whose $r$th row is $(1,  \dots , 1, u , \dots ,
u)$, the first $u$ occurring in column $p^{e_i}-r+2$. Let $N_i$ be
the $|G|\times |G|$ matrix obtained by taking the Kronecker
product of $M_i$ with the all $1$s matrix. Up to permutation
equivalence, the $N_i$ constitute a complete set of
representatives for the elements of $I(G,U)/B(G,U)$ displayed as
cocyclic matrices. For more detail see \cite{FlOB}.

\subsection{Shift action}\label{ShiftAction}

In a search for orthogonal elements of $Z(G,\Cr_p)$, it is not
enough to test a single $\psi$ from each cohomology class $[\mu
]\in H(G,\Cr_p)$: if $\psi$ is orthogonal then $\psi'\in [\mu]$
need not be orthogonal. Horadam~\cite[Chapter~8]{Horadam}
discovered an action of $G$ on each $[\psi ]$ that preserves
orthogonality, defined by $\psi \cdot g = \psi
\partial (\psi_g)$ where $\psi_g(x)=\psi(g,x)$.
This `shift' action induces a linear representation $G\rightarrow
\mathrm{GL}(V)$ where $V$ is any $G$-invariant subgroup of
$Z(G,\Cr_p)$, allowing effective computation of orbits in
$V$~\cite{SR2014}.

\subsection{Further equivalences for cocyclic matrices}

Equivalence operations preserving cocycle orthogonality, apart
from local ones, arise from the shift action or natural actions on
$Z(G,\langle \zeta_p\rangle)$ by $\mathrm{Aut}(G)\times
\mathrm{Aut}(\Cr_p)$. The action by $\mathrm{Aut}(\Cr_p)$ alone
furnishes a \emph{global} equivalence operation. Together with the
local operations these generate the holomorph $\Cr_p\rtimes
\Cr_{p-1}$ of $\langle \zeta_p\rangle$~\cite[Theorem
4.4.10]{deLF}.

\subsection{Central relative difference sets}\label{BriefCRDS}

\begin{theorem}\label{FamousCRDS}
There exists a cocyclic $\BH(n,p)$ with cocycle $\psi$ if and only
if there is a relative difference set in $E(\psi)$ with parameters
$(n,p,n,n/p)$ and central forbidden subgroup $\langle (1,\zeta_p)
\rangle$.
\end{theorem}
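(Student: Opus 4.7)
The plan is to translate the orthogonality of $H = [\psi(x,y)]_{x,y \in G}$ into an equidistribution property of the rows of $\psi$, and then recognize the same property as exactly what makes a canonical transversal for $Z = \langle (1,\zeta_p)\rangle$ in $E(\psi)$ into an RDS with the stated parameters.

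First I would take $R = \{(g,1) : g \in G\} \subseteq E(\psi)$, which meets each coset of $Z$ exactly once and has size $n$. Using the multiplication rule of the extension together with the normalization $\psi(1,\cdot) = \psi(\cdot,1) = 1$, I would compute $(g_2,1)^{-1} = (g_2^{-1},\psi(g_2,g_2^{-1})^{-1})$ and hence
\[
(g_1,1)(g_2,1)^{-1} = (g_1 g_2^{-1},\, \psi(g_2,g_2^{-1})^{-1}\psi(g_1,g_2^{-1})).
\]
Setting $g = g_1 g_2^{-1}$ and applying the cocycle identity with $x = g$, $y = g_2$, $z = g_2^{-1}$ would collapse the second coordinate to $\psi(g,g_2)^{-1}$. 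As $g_2$ varies with $g$ fixed, the differences lying above $g$ are then $\{(g,\psi(g,g_2)^{-1}) : g_2 \in G\}$. Hence $R$ is an $(n,p,n,n/p)$ relative difference set in $E(\psi)$ with forbidden subgroup $Z$ if and only if, for every non-identity $g \in G$, the map $g_2 \mapsto \psi(g,g_2)$ hits each $p$th root of unity exactly $n/p$ times.

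Next I would match this equidistribution to the Butson condition $HH^* = nI_n$. For rows $x_1 \neq x_2$ of $H$, a parallel cocycle manipulation gives $\psi(x_1,y)\psi(x_2,y)^{-1} = \psi(g,x_2 y)\psi(g,x_2)^{-1}$ where $g = x_1 x_2^{-1}$. As $y$ ranges over $G$, so does $x_2 y$, so the multiset $\{\psi(x_1,y)\psi(x_2,y)^{-1} : y \in G\}$ is a fixed scalar multiple of $\{\psi(g,z) : z \in G\}$. By Lemma~\ref{WhenASumOfPthRootsIsZero} the inner product $\sum_y \psi(x_1,y)\psi(x_2,y)^{-1}$ vanishes if and only if every element of $\langle \zeta_p\rangle$ appears with multiplicity $n/p$ in the latter multiset, which is precisely the row-equidistribution condition derived in the previous paragraph. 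The main obstacle is purely bookkeeping: choosing the transversal and the direction of the difference so that the second coordinate collapses cleanly, and confirming that the resulting parameter set is $(n,p,n,n/p)$ with forbidden subgroup $\langle (1,\zeta_p)\rangle$ rather than some reparametrization. Once the cocycle identities have been applied correctly, no deeper input beyond Lemma~\ref{WhenASumOfPthRootsIsZero} is required.
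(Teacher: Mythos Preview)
Your argument is correct and supplies a direct, self-contained proof, whereas the paper's proof is simply a citation of \cite[Corollary~15.4.2]{deLF} and \cite[Theorem~4.1]{PereraHoradam}. What you do is essentially the argument underlying those references: take the canonical transversal $R=\{(g,1):g\in G\}$, use the cocycle identity to reduce the difference $(g_1,1)(g_2,1)^{-1}$ to $(g,\psi(g,g_2)^{-1})$, and then identify the RDS condition with row-balance of $[\psi(x,y)]$, which by Lemma~\ref{WhenASumOfPthRootsIsZero} is exactly the Butson condition (this last equivalence is also Theorem~\ref{NickelRedux} specialized to $K=\Cr_p$, so you could have quoted that instead of redoing the inner-product computation). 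The payoff of your approach is that the reader sees the mechanism, not just a pointer; nothing is lost relative to the paper.

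One small point worth making explicit: you prove ``$\psi$ is orthogonal $\Longleftrightarrow$ the specific set $R=\{(g,1)\}$ is an RDS''. That gives the forward implication of the theorem immediately. For the converse as literally stated (``\emph{there is} a relative difference set in $E(\psi)$''), an arbitrary RDS is a transversal $\{\tau(g):g\in G\}$ for $\langle(1,\zeta_p)\rangle$, and your same computation with $\tau$ in place of the trivial section shows that $\psi_\tau$ is orthogonal; this is precisely what the paper spells out in the paragraph following the theorem. Since $\psi_\tau\in[\psi]$, this yields a cocyclic $\BH(n,p)$ with a cocycle cohomologous to $\psi$, which is how the theorem is used throughout the paper.
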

\begin{proof}
This follows from \cite[Corollary 15.4.2]{deLF} or \cite[Theorem
4.1]{PereraHoradam}. \qedhere
\end{proof}

We explain one direction of the correspondence in
Theorem~\ref{FamousCRDS}. Let $E$ be a central extension of
$U\cong \Cr_p$ by $G$. Say $\iota$ embeds $U$ into the center of
$E$, and $\pi: E\rightarrow G$ is an epimorphism with kernel
$\iota(U)$. Suppose that $R = \{ d_1=1, d_2, \ldots,
d_n\}\subseteq E$ is an $(n,p,n,n/p)$-relative difference set with
forbidden subgroup $U$; i.e., the multiset of quotients
$d_id_{j}^{-1}$ for $j\neq i$ contains each element of $E\setminus
\iota(U)$ exactly $n/p$ times, and contains no element of
$\iota(U)$. Since $R$ is a transversal for the cosets of
$\iota(U)$ in $E$, we have $G = \{ g_i := \pi(d_i) \hspace{2.5pt}
| \hspace{2.5pt} 1\leq i\leq n \}$. Put $\tau(g_i)=d_i$. Then
$[\psi_\tau(x,y)]_{x,y\in G}$ is a $\BH(n,p)$.

\section{Cocyclic Butson Hadamard matrices}\label{CocyclicBH}
\begin{theorem}\label{NickelRedux}
Let $K$ be abelian, $n=|G|$ be divisible by $|K|$, $\psi \in
Z(G,K)$, and $H=\abk [\psi(x,y)]_{x,y\in G}$. Then $H$ is a
$\GH(n,K)$ if and only if it is row-balanced. In that event $H$ is
column-balanced too.
\end{theorem}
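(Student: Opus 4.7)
The plan is to work entirely inside the group ring $\mathbb{Z}K$ and to translate the $\GH$-defining identity $HH^\ast = nI_n + \frac{n}{|K|}S(J_n-I_n)$, where $S=\sum_{k\in K}k$, into statements about row sums of $H$. Since $\psi$ is normalized, the first row (and column) of $H$ is the all-ones vector, so ``row-balanced'' amounts to the algebraic condition $\sum_{z\in G}\psi(g,z)=\frac{n}{|K|}S$ in $\mathbb{Z}K$ for every $g\in G\setminus\{1\}$. The diagonal entries of $HH^\ast$ are automatically equal to $n$, so the real content of both implications is the value of the off-diagonal entries.

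For the forward direction (GH implies row-balanced), I would simply read off the entry $(HH^\ast)_{1,y}$ for $y\ne 1$: it equals $\sum_{z}\psi(1,z)\psi(y,z)^{-1}=\sum_{z}\psi(y,z)^{-1}$, and this must be $\frac{n}{|K|}S$; since inversion is a bijection of $K$, this exactly says that row $y$ of $H$ is balanced.

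The reverse direction is where the cocycle identity does the real work, and I expect this to be the main obstacle: one must rewrite $\psi(x,z)\psi(y,z)^{-1}$ as a single value of $\psi$ along a different row. Taking the cocycle identity $\psi(a,b)\psi(ab,c)=\psi(a,bc)\psi(b,c)$ with $a=xy^{-1}$, $b=y$, $c=z$, one obtains
\[
\psi(x,z)\psi(y,z)^{-1}=\psi(xy^{-1},y)^{-1}\,\psi(xy^{-1},yz).
\]
Summing over $z\in G$ and substituting $w=yz$ gives
\[
(HH^\ast)_{x,y}=\psi(xy^{-1},y)^{-1}\sum_{w\in G}\psi(xy^{-1},w).
\]
Because $x\ne y$ we have $xy^{-1}\ne 1$, so row-balance yields $\sum_{w}\psi(xy^{-1},w)=\frac{n}{|K|}S$; finally $kS=S$ for every $k\in K$ collapses the prefactor, giving $(HH^\ast)_{x,y}=\frac{n}{|K|}S$ as required.

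The ``column-balanced too'' statement then comes for free from the transpose remark made just after Lemma~\ref{OneWay}: when $K$ is abelian, $H^\top$ is itself a $\GH(n,K)$, and its first row is the (all-ones) first column of $H$, so the forward direction applied to $H^\top$ shows that every non-initial column of $H$ is balanced.
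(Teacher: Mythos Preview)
Your argument is correct. The paper, however, does not give a self-contained proof: it simply cites \cite[Lemma~6.6]{Horadam} (the generalization of the cocyclic-Hadamard row-sum criterion). What you have written is essentially the standard proof of that lemma specialized to the $\GH$ setting---the cocycle identity with $a=xy^{-1}$, $b=y$, $c=z$ is exactly the trick that converts the inner product of rows $x$ and $y$ into a constant times the row sum of row $xy^{-1}$, and then the absorbing property $kS=S$ finishes. Your forward direction and the column-balance deduction via transposition are likewise routine and correct (note that the forward step uses only that row~$1$ is constant, not the cocycle structure, so it transfers cleanly to $H^\top$). In short: the paper defers to a reference, while you supply the underlying computation; there is no substantive divergence.
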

\begin{proof}
This follows from \cite[Lemma~6.6]{Horadam}, which generalizes a
phenomenon observed for cocyclic Hadamard
matrices~\cite[Theorem~16.2.1]{deLF}. \qedhere
\end{proof}

So we begin our classification by searching for balanced cocycles
in the relevant $Z(G,\Cr_p)$. When $k$ is not prime, a cocyclic
$\BH(n,k)$ need not be balanced; by \cite[Lemma~6.6]{Horadam}
again, $[\psi(x,y)]_{x,y\in G}$ for $\psi \in Z(G,\langle
\zeta_k\rangle)$ is a $\BH(n,k)$ if and only if each non-initial
row sum is zero.

We mention extra pertinent facts about Fourier matrices.
\begin{lemma}\label{Vandermonde}
The Fourier matrix of order $n$ is a cocyclic $\BH(n,n)$ with
indexing group $\Cr_n$. If $n$ is odd then it is equivalent to a
group-developed matrix.
\end{lemma}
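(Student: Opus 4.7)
My proof plan has two steps, matching the two assertions in the statement.

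For the first claim, I exhibit a cocycle explicitly. Identify $\Cr_n$ with $\mathbb{Z}/n\mathbb{Z}$ written additively, and define $\psi: \Cr_n \times \Cr_n \to \langle \zeta_n \rangle$ by $\psi(x,y) = \zeta_n^{xy}$, where the product $xy$ is computed on representatives in $\{0,1,\ldots,n-1\}$. Normalization $\psi(0,y) = \psi(x,0) = 1$ is immediate, and the cocycle identity $\psi(x,y)\psi(x+y,z) = \psi(x,y+z)\psi(y,z)$ holds because both sides reduce by bilinearity to $\zeta_n^{xy+xz+yz}$. Under the natural indexing of rows and columns by $\Cr_n$, the Fourier matrix $F_n = [\zeta_n^{ij}]_{i,j}$ is precisely $[\psi(x,y)]_{x,y \in \Cr_n}$, and $F_nF_n^* = nI_n$ is the standard orthogonality relation for $n$th roots of unity. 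Hence $F_n$ is a cocyclic $\BH(n,n)$ with indexing group $\Cr_n$.

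For the second claim, when $n$ is odd I would realise $\psi$ as a coboundary. Since $\gcd(2,n) = 1$, pick $c \in \mathbb{Z}$ with $2c \equiv 1 \pmod n$ and set $\phi(x) = \zeta_n^{cx^2}$. Using the identity $(x+y)^2 - x^2 - y^2 = 2xy$, one computes
\[
\partial \phi(x,y) = \phi(x)^{-1}\phi(y)^{-1}\phi(x+y) = \zeta_n^{c((x+y)^2 - x^2 - y^2)} = \zeta_n^{2cxy} = \zeta_n^{xy} = \psi(x,y),
\]
so $\psi \in B(\Cr_n, \langle \zeta_n\rangle)$. Then the diagonal equivalence $H \mapsto \mathrm{diag}(\phi(x)^{-1})\, H\, \mathrm{diag}(\phi(y)^{-1})$ sends $[\psi(x,y)]_{x,y}$ to $[\phi(x+y)]_{x,y}$, a matrix group-developed over $\Cr_n$ by the map $\phi$. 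This exhibits the required equivalence, and as a sanity check matches the paper's observation in Section \ref{RehashCocyclic} that group-developed cocyclic matrices have coboundary cocycles.

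There is no genuine obstacle: the only place where oddness of $n$ enters is the invertibility of $2$ modulo $n$, which is exactly what is needed to solve the quadratic completion $xy = \tfrac{1}{2}((x+y)^2 - x^2 - y^2)$ inside $\mathbb{Z}/n\mathbb{Z}$. Spotting the quadratic ansatz $\phi(x) = \zeta_n^{cx^2}$ is the only step requiring any cleverness; everything else is a direct verification.
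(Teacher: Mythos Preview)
The paper states this lemma without proof, so there is no argument to compare against. Your proof is correct and is exactly the standard verification one would expect: the bilinear exponent $\psi(x,y)=\zeta_n^{xy}$ is visibly a normalized cocycle displaying the Fourier matrix, and for odd $n$ the quadratic Gauss-type map $\phi(x)=\zeta_n^{cx^2}$ with $2c\equiv 1\pmod n$ realizes $\psi=\partial\phi$, whence the cocyclic display is equivalent to the group-developed matrix $[\phi(x+y)]_{x,y}$.

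One small slip: the diagonal equivalence you wrote has the inverses on the wrong side. Since $\psi(x,y)=\phi(x)^{-1}\phi(y)^{-1}\phi(x+y)$, you want
\[
\mathrm{diag}(\phi(x))\,[\psi(x,y)]_{x,y}\,\mathrm{diag}(\phi(y)) = [\phi(x+y)]_{x,y},
\]
not $\mathrm{diag}(\phi(x)^{-1})$ on each side. This does not affect the argument, as either choice of diagonal matrices lies in $\mathrm{Mon}(n,\langle\zeta_n\rangle)$ and the point is merely that such an equivalence exists.
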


\begin{proposition}[\cite{HiranandaniSchlenker}]
\label{CircppFourier} Every circulant $\BH(p,p)$ is equivalent to
the Fourier matrix of order $p$.
\end{proposition}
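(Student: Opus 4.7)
The plan is to translate the Hadamard condition on a circulant $\BH(p,p)$ into a combinatorial condition on the exponent sequence of its first row, deduce from this that the sequence is a quadratic polynomial mod $p$, and finally convert the resulting matrix into the Fourier matrix by monomial equivalences. The case $p=2$ is vacuous, since a direct check shows that no circulant $\BH(2,2)$ exists, so I assume $p$ is odd throughout.

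Write $H_{ij} = \zeta^{e_{j-i}}$ with $\zeta = \zeta_p$ and $e\colon \mathbb{Z}/p\mathbb{Z} \to \mathbb{Z}/p\mathbb{Z}$. The Hermitian inner product of rows $i$ and $i+m$ of $H$ equals $\sum_{j} \zeta^{e_{j+m} - e_j}$, so row-orthogonality forces
\[
\sum_{j=0}^{p-1} \zeta^{e_{j+m} - e_j} \;=\; 0 \qquad \text{for every } m \in (\mathbb{Z}/p\mathbb{Z}) \setminus \{0\}.
\]
By Lemma \ref{WhenASumOfPthRootsIsZero}, the multiset $\{e_{j+m}-e_j : j \in \mathbb{Z}/p\mathbb{Z}\}$ must equal $\{0,1,\dots,p-1\}$, so the map $\delta_m\colon j \mapsto e_{j+m}-e_j$ is a bijection of $\mathbb{Z}/p\mathbb{Z}$ for every nonzero $m$. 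This is precisely the statement that $e$ is a \emph{planar function} $\mathbb{F}_p \to \mathbb{F}_p$.

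The substantive step, and the main obstacle, is to conclude from planarity that $e(j) = \alpha j^2 + \beta j + \gamma$ for some $\alpha \in \mathbb{F}_p^*$ and $\beta,\gamma \in \mathbb{F}_p$. I would invoke the classical theorem (of R\'onyai--Sz\H{o}nyi, and independently Gluck) that every planar function on a prime field is a quadratic polynomial. In principle one can argue more directly by representing $e$ as a polynomial of degree at most $p-1$ and observing that each $\Delta_m e(j) = e(j+m)-e(j)$ must be a permutation polynomial of $\mathbb{F}_p$, but carrying out the necessary polynomial-degree analysis over a general prime field essentially amounts to reproving the planar-function classification, so citing it is cleaner.

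With $e$ quadratic, the identity
\[
\alpha(j-i)^2 + \beta(j-i) + \gamma \;=\; \bigl(\gamma + \alpha i^2 - \beta i\bigr) + \bigl(\alpha j^2 + \beta j\bigr) - 2\alpha ij
\]
shows that after dividing $H$ by the scalar $\zeta^{\gamma}$, scaling row $i$ by $\zeta^{-\alpha i^2 + \beta i}$, and scaling column $j$ by $\zeta^{-\alpha j^2 - \beta j}$---all monomial $\BH$-equivalence operations---the matrix reduces to $[\zeta^{-2\alpha ij}]_{i,j}$. Finally, the column permutation $j \mapsto -(2\alpha)^{-1}j$, a well-defined bijection of $\mathbb{Z}/p\mathbb{Z}$ since $p$ is odd, converts this into $[\zeta^{ij}]_{i,j}$, the Fourier matrix of order $p$, completing the reduction.
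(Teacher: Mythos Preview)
Your argument is correct. In the paper this proposition carries no proof at all; it is simply quoted from \cite{HiranandaniSchlenker}, so there is nothing to compare against on the paper's side. Your route through planar functions is the natural and standard one: Butson orthogonality combined with Lemma~\ref{WhenASumOfPthRootsIsZero} forces the exponent map $e$ to be planar on $\mathbb{F}_p$, and the Gluck / Hiramine / R\'onyai--Sz\H{o}nyi classification of planar functions over prime fields pins $e$ down as a nondegenerate quadratic. The final monomial reduction is clean and correctly worked out (and $\alpha\neq 0$ is indeed forced, as you note, since a linear $e$ gives constant difference maps). One cosmetic remark: ``dividing $H$ by the scalar $\zeta^{\gamma}$'' is of course the same as absorbing $\gamma$ into a single row or column scaling, so all the operations you use are genuinely local in the sense the paper requires.
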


\begin{proposition}[\cite{Hirasakaetal}]\label{HiraSmall}
For $p\leq 17$, the Fourier matrix of order $p$ is the unique
$\BH(p,p)$ up to equivalence.
\end{proposition}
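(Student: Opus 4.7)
The plan is to normalize $H$ so that its first row and column are all $1$s, use Lemma~\ref{WhenASumOfPthRootsIsZero} to show this reduces $H$ to a highly constrained Latin square on $\{1,\dots,p-1\}$, and then verify combinatorial uniqueness for each of the primes $p\le 17$.

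First I would normalize a given $\BH(p,p)$, denoted $H$, so that its first row and column are all $1$s. Orthogonality of any non-initial row with the first is a vanishing sum of $p$ $p$th roots of unity; by Lemma~\ref{WhenASumOfPthRootsIsZero} each $p$th root then occurs exactly once in that row, and the same holds column-wise for the non-initial columns. Writing $H_{ij}=\zeta_p^{a_{ij}}$ with $a_{1j}=a_{i1}=0$, the vectors $v_i=(a_{i1},\ldots,a_{ip})$ for $i\ge 2$ are permutations of $\mathbb{Z}/p$, and applying the same lemma to the inner product of two non-initial rows forces the coordinatewise difference $v_i-v_j$ to be a permutation of $\mathbb{Z}/p$ for every $i\ne j$.

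Next I would apply a column permutation fixing column~$1$ to bring $v_2$ into the standard form $(0,1,\ldots,p-1)$. The $(p-1)\times(p-1)$ block $L=(a_{ij})_{i,j\ge 2}$ is then a Latin square on $\{1,\ldots,p-1\}$ whose first row is $(1,2,\ldots,p-1)$, and the condition on row differences translates to: for any two rows of $L$, the coordinatewise difference mod $p$ is a permutation of $\{1,\ldots,p-1\}$. The Fourier matrix corresponds to the multiplicative choice $L_{ij}=(i-1)(j-1)\bmod p$, coming from the action of $\mathbb{F}_p^*$ on $\mathbb{F}_p\setminus\{0\}$. The residual equivalence group consists of permutations of rows $3,\dots,p$ together with the column permutations of columns $2,\dots,p$ that preserve $v_2$ as an ordered sequence.

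The main obstacle is the final combinatorial uniqueness: for each prime $p$ in the range, show that the only Latin square with the above properties is, up to residual equivalence, the multiplicative one. For $p\le 5$ this is a direct case analysis. For $p\in\{7,11,13,17\}$ the most practical route is a symmetry-reduced backtracking search, filling in rows in order and propagating the permutation-difference constraints against all previously placed rows; because fixing $v_2$ has already removed most of the column symmetry and each new row must be a permutation of $\{1,\ldots,p-1\}$ which, together with every previously placed row, differs by a permutation, the search tree is small enough to terminate quickly. Alternatively, one may invoke the classification of sharply transitive sets of permutations of prime degree~$p$, which in these ranges consist only of the one-dimensional affine actions, forcing $L$ to be multiplicative. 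Either route concludes that $H$ is equivalent to the Fourier matrix.
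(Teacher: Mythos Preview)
The paper does not give its own proof of this proposition: it is stated with a citation to \cite{Hirasakaetal} and no argument is supplied. So there is nothing in the paper to compare your proposal against at the level of proof details.

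That said, your main line of attack is sound and is essentially the natural one: normalize, use Lemma~\ref{WhenASumOfPthRootsIsZero} to force each non-initial row (and column) to be a permutation of $\mathbb{Z}/p$, deduce that pairwise row differences are also permutations, fix $v_2=(0,1,\ldots,p-1)$ by a column permutation, and then run a constrained search on the resulting $(p-1)\times(p-1)$ Latin square for each $p\le 17$. This reduction is correct and the search is entirely feasible at these sizes.

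Two points to tighten. First, your description of the residual equivalence group is too small: after fixing $v_1$ and $v_2$ you still have, in addition to permutations of rows $3,\ldots,p$, the global action of $\mathrm{Aut}(\Cr_p)$ (multiply all exponents by a unit and then re-sort columns to restore $v_2$), as well as transposition-type moves coming from the column side. Undercounting the symmetry does not break correctness of an exhaustive search, but you should account for it when arguing that any two completed squares are equivalent. Second, your ``alternative'' via a classification of sharply transitive sets of permutations of prime degree~$p$ is not right as stated: the permutations you obtain act on a set of size $p-1$, not $p$, and sharply transitive sets of that degree are simply Latin squares, for which no such uniqueness holds in general. The extra row-difference condition is what does the work, and that is not captured by sharp transitivity alone. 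I would drop that alternative and rely on the direct search, which is what the cited paper effectively does.
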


\subsection{Non-existence of cocyclic Butson Hadamard matrices}

As we expect, there are restrictions on the order of a
group-developed Butson Hadamard matrix.
\begin{lemma}\label{GeneralGDevLem}
Set $r_{j} = \mathrm{Re}(\zeta_k^j)$ and $s_{j} =
\mathrm{Im}(\zeta_k^j)$. A $\BH(n,k)$ with constant row and column
sums exists only if there are $x_{0},\ldots, x_{k-1} \in
\{0,1,\ldots,n\}$ satisfying
\begin{equation}\label{GeneralGDevPoly}
\big(\textstyle{\sum}_{j=0}^{k-1} r_{j}x_{j}\big)^2 +
\big(\textstyle{\sum}_{j=0}^{k-1}s_{j}x_{j}\big)^2 = n
\end{equation}
and $\sum_{j=0}^{k-1} x_{j} = n$.
\end{lemma}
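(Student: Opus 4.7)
The plan is to observe that the defining equation $HH^\ast = nI_n$ forces a strong constraint on the common row/column sum, and then read off the stated Diophantine condition by decomposing the sum into its real and imaginary parts.

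First I would note that if every row sum equals $S$ and every column sum equals $C$, then computing $\sum_{i,j}h_{ij}$ two ways gives $nS=nC$, so $S=C$; call the common value $\sigma$. In matrix form, $H\mathbf{1} = \sigma\mathbf{1}$ and $H^{\top}\mathbf{1}=\sigma\mathbf{1}$, the latter yielding $H^\ast\mathbf{1}=\bar{\sigma}\mathbf{1}$ after complex conjugation.

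Next I would apply the Butson identity. Since $HH^\ast=nI_n$ and $H$ is square, also $H^\ast H=nI_n$. Hitting $\mathbf{1}$ with $H^\ast H$ on the left gives
\[
n\mathbf{1}=H^\ast H\mathbf{1}=H^\ast(\sigma\mathbf{1})=\sigma H^\ast\mathbf{1}=|\sigma|^2\mathbf{1},
\]
so $|\sigma|^2=n$.

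Finally I would extract the integer parameters. Pick any fixed row of $H$ and let $x_j$ count the number of entries in that row equal to $\zeta_k^j$, for $0\le j\le k-1$. Then $x_j\in\{0,1,\ldots,n\}$, $\sum_j x_j = n$, and the row sum is
\[
\sigma=\sum_{j=0}^{k-1}x_j\zeta_k^j=\sum_{j=0}^{k-1}x_jr_j+\mathrm{i}\sum_{j=0}^{k-1}x_js_j.
\]
Substituting into $|\sigma|^2=n$ yields the displayed equation~(\ref{GeneralGDevPoly}). There is no serious obstacle here; the only point that requires a moment's thought is the equality of row and column sums, which is what lets us identify $H^\ast\mathbf{1}$ with $\bar{\sigma}\mathbf{1}$ and thereby convert the unitarity relation into a norm condition on a single complex number.
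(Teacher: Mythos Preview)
Your proof is correct and follows essentially the same route as the paper: both arguments use the Butson identity to deduce $|\sigma|^2=n$ for the common row/column sum $\sigma$, then read off the Diophantine condition from $\sigma=\sum_j x_j\zeta_k^j$. The paper carries out the computation with $J_n$ in place of your all-ones vector $\mathbf{1}$ (i.e., $nJ_n=J_nHH^\ast=s\bar{s}J_n$), and simply assumes the common value $s$ for row and column sums, whereas you supply the brief double-counting argument showing these coincide.
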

\begin{proof}
Let $H$ be a $\BH(n,k)$ with every row and column summing to $s =
\sum_{j=0}^{k-1}x_{j}\zeta_k^j = a + bi$. Then
\[
nJ_n= J_nHH^*= s J_nH^* = s\overline{s}J_n
\]
implies $n=a^2 + b^2$, which is \eqref{GeneralGDevPoly}. \qedhere
\end{proof}
\begin{remark}\label{GDExclusions}
If $k = 2$ then \eqref{GeneralGDevPoly} just gives that $n$ must
be square, which is well-known. If $k=4$ then $n$ is the sum of
two integer squares. As a sample of other exclusions, the
following cannot be the order of a group-developed $\BH(n,k)$.
\begin{itemize}
\item[{\rm (i)}] $k=3$, $n\leq 100$: $6$, $15$, $18$, $24$, $30$,
$33$, $42$, $45$, $51$, $54$, $60$, $66$, $69$, $72$, \abk $78$,
$87$, \abk $90$, \abk $96$, $99$. \item[{\rm (ii)}] $k = 5$,
$n\leq 25$: $10$, $15$.
\end{itemize}
Some of these orders are covered by general results (see
Remark~\ref{deL84Cor}).
\end{remark}

Henceforth $p$ is odd.
\begin{lemma}\label{PartialIGU}
Let $k=p^t$ and $n=p^rm$ where $p\nmid m$. Suppose that $H$ is a
cocyclic $\BH(n,k)$ with indexing group $G$ such that $G/G'$ has a
cyclic subgroup of order $p^r$. Then any cocycle $\psi\in
I(G,\Cr_{k})$ of $H$ is in $I(G,\Cr_{k})^p$.
\end{lemma}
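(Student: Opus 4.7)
The strategy is to analyse the product of the entries in one carefully chosen row of $H$ and use the structure of vanishing sums of prime-power roots of unity to constrain that product.

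First I would reduce $\psi$ to a canonical inflated form. Because $\Cr_k$ is a $p$-group, $\mathrm{Ext}(A,\Cr_k)=0$ for every abelian group $A$ of order coprime to $p$, so $I(G,\Cr_k)/B(G,\Cr_k)\cong\mathrm{Ext}(G/G',\Cr_k)$ depends only on the Sylow $p$-subgroup of $G/G'$. The hypothesis, combined with the fact that $|G/G'|$ divides $n=p^rm$, forces that Sylow subgroup to equal $\Cr_{p^r}$; hence $I(G,\Cr_k)/B(G,\Cr_k)\cong \mathrm{Ext}(\Cr_{p^r},\Cr_{p^t})\cong \Cr_{p^{\min(r,t)}}$. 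Choosing $\{0,\dots,p^r-1\}$ as integer representatives for $\Cr_{p^r}$ and a generator $u$ of $\Cr_k$, up to multiplication by a coboundary I may assume
\[ \psi(x,y) = u^{j\,c(\pi_p(x),\pi_p(y))}\,\partial\phi(x,y), \]
where $\pi_p\colon G \to G/G'\to\Cr_{p^r}$ is the projection to the $p$-part, $c(a,b)=1$ if $a+b\geq p^r$ and $c(a,b)=0$ otherwise, $\phi\colon G\to\Cr_k$, and $j\in\{0,1,\dots,p^{\min(r,t)}-1\}$ represents $[\psi]$. The desired conclusion $\psi\in I(G,\Cr_k)^p$ is then $p\mid j$.

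Next, I would pick $x\in G$ with $\pi_p(x)=1$; such $x$ exists because $\pi_p$ is surjective and is automatically non-identity, so the Butson property forces its row of $H$ to sum to zero in $\mathbb{C}$. A direct calculation of the product of those row entries gives $P_x := \prod_{y\in G}\psi(x,y) = u^{jm}\phi(x)^{-n}$: each fibre of $\pi_p$ has size $m$ and $c(1,b)=1$ only for $b=p^r-1$, producing the factor $u^{jm}$; while the bijection $y\mapsto xy$ on $G$ yields $\prod_y\phi(xy)\phi(y)^{-1}=1$, so $\prod_y\partial\phi(x,y)=\phi(x)^{-n}$.

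Third, I would invoke the structure theorem for vanishing sums of $p^t$-th roots of unity (the prime-power case of the Lam--Leung result cited with Theorem~\ref{Lamlike}; equivalently R\'edei--de Bruijn): every $\mathbb{N}$-linear vanishing sum decomposes as a disjoint multiset union of basic sums $\{\zeta_{p^t}^c,\zeta_{p^t}^{c+p^{t-1}},\dots,\zeta_{p^t}^{c+(p-1)p^{t-1}}\}$. For odd $p$ the product of the $p$ roots in each basic sum equals $\zeta_{p^t}^{pc}\in\Cr_k^p$. Applying this to the vanishing multiset of row-$x$ entries of $H$ shows $P_x\in\Cr_k^p$. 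Since Theorem~\ref{Lamlike} forces $p\mid n$, the factor $\phi(x)^{-n}$ already lies in $\Cr_k^p$; thus $u^{jm}\in\Cr_k^p$, i.e.\ $p\mid jm$. As $\gcd(p,m)=1$ we deduce $p\mid j$, completing the argument.

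The main obstacle is justifying the vanishing-sum structure for $p^t$-th roots, since Lemma~\ref{WhenASumOfPthRootsIsZero} only covers $t=1$. An alternative that avoids the prime-power theorem is to reduce to the prime case: applying the entrywise homomorphism $u\mapsto u^{p^{t-1}}$ turns $H$ into a cocyclic $\BH(n,p)$ with cocycle $\psi^{p^{t-1}}$; the induced map $I(G,\Cr_{p^t})/B\to I(G,\Cr_p)/B$ is reduction mod $p$ on the cyclic $\mathrm{Ext}$-groups; and the row-sum argument at $t=1$ needs only Lemma~\ref{WhenASumOfPthRootsIsZero} to yield $[\psi^{p^{t-1}}]=0$ in $\Cr_p$, hence $p\mid j$ once more.
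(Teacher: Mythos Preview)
Your proof is correct and follows essentially the same strategy as the paper's: write $\psi$ as an inflated cocycle times a coboundary, compute the product of entries along a well-chosen non-identity row, observe that the coboundary contributes a $p$th power while the inflated part contributes $u^{jm}$, and then use the structure of vanishing sums of $p^t$th roots of unity (the paper argues this directly via the $p^t$th cyclotomic polynomial rather than citing Lam--Leung) to force the row product into $\Cr_k^p$, whence $p\mid j$. Your explicit identification of the Sylow $p$-subgroup of $G/G'$ as $\Cr_{p^r}$ and the carry-cocycle formulation make the computation more transparent, and your alternative reduction to $t=1$ via $u\mapsto u^{p^{t-1}}$ is a nice extra that the paper does not include.
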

\begin{proof} (Cf.~\cite[Corollary~7.44]{Horadam}.)
By Subsection~\ref{CCocycles}, $\psi = \psi_1\partial \phi$ for
some $\psi_1$ inflated from $Z(G/G',\Cr_{k})$ and map $\phi$.
Assume that $\psi_1\not \in I(G,\Cr_{k})^p$. Then
$[\psi_1(x,y)]_{x,y\in G}$ has a row with $m$ occurrences of
$\zeta_k$ and every other entry equal to $1$. Label this row $a$.
Now
\begin{align*}
{\textstyle \prod}_{y\in G} \partial \phi (a,y) &=
\big({\textstyle \prod}_{y\in G} \phi (a)^{-1} \big) \big(
{\textstyle \prod}_{y\in G} \phi (y)^{-1} \big) \big( {\textstyle
\prod}_{y\in G} \phi
(ay)\big)\\
&= \phi(a)^{-n} \in \langle \zeta_k^p\rangle.
\end{align*}
So, if we multiply along row $a$ of $[\psi(x,y)]_{x,y\in G}$ then
we get an element of $\langle \zeta_k\rangle \setminus \langle
\zeta_k^p\rangle$. But this is a contradiction. For suppose that
$\sum_{i=0}^{k-1}c_i{\zeta_k}^{\! i}=0$. Since the $k$th
cyclotomic polynomial $\sum_{i=0}^{p-1}{\rm x}^{i( p^{t-1})}$
divides $\sum_{i=0}^{k-1}c_i{\rm x}^{i}$, we have $c_j =
c_{p^{t-1}+j} = \cdots = c_{(p-1)p^{t-1}+j}$, $0\leq j\leq
p^{t-1}-1$. It is then straightforward to verify that ${\textstyle
\prod}_{i=0}^{k-1}\zeta_k^{i c_i} \in \langle \zeta_k^p\rangle$.
\qedhere
\end{proof}

\begin{corollary}\label{pSquareFree}
If $n$ is $p$-square-free then a cocyclic $\BH(n,p)$ is equivalent
to a group-developed matrix.
\end{corollary}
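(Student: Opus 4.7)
My plan is to show that any cocycle $\psi \in Z(G, \Cr_p)$ of $H$ is a coboundary; once that is done, writing $\psi = \partial\phi$ gives $H = [\phi(x)^{-1}\phi(y)^{-1}\phi(xy)]_{x,y\in G}$, and the diagonal monomial equivalence $\mathrm{diag}(\phi)\, H \,\mathrm{diag}(\phi) = [\phi(xy)]_{x,y\in G}$ exhibits a group-developed matrix equivalent to $H$.

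By Theorem~\ref{Lamlike} we have $p \mid n$; since $n$ is $p$-square-free we can write $n = pm$ with $\gcd(p,m) = 1$, so the indexing group $G$ of $H$ has cyclic Sylow $p$-subgroup of order $p$. First I would eliminate the transgression summand in the Universal Coefficient decomposition of Subsection~\ref{CCocycles}: the standard transfer--restriction embedding $H_2(G)_p \hookrightarrow H_2(\Cr_p) = 0$ gives $T(G,\Cr_p)/B(G,\Cr_p) \cong \mathrm{Hom}(H_2(G), \Cr_p) = 0$, so $\psi$ already lies in $I(G, \Cr_p)$.

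Next I would apply Lemma~\ref{PartialIGU} according to whether $p$ divides $|G/G'|$. If not, then $\mathrm{Ext}(G/G', \Cr_p) = 0$ and $I/B$ is trivial, so $\psi$ is a coboundary outright. Otherwise $|G/G'| \mid pm$ forces the Sylow $p$-subgroup of $G/G'$ to have order exactly $p$, so $G/G'$ contains a cyclic subgroup of order $p = p^r$ (with $r = 1$) and the lemma applies. Since $\Cr_p$, and hence $Z(G, \Cr_p)$, has exponent $p$, the formal conclusion $\psi \in I(G, \Cr_p)^p$ forces the inflation representative $\psi_1$ in the decomposition $\psi = \psi_1 \partial\phi$ to be trivial (as traced through the proof of that lemma), so once again $\psi = \partial\phi$ is a coboundary.

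The main obstacle will be navigating the edge-case behaviour of Lemma~\ref{PartialIGU} at $k = p$: the literal inclusion $\psi \in I(G,\Cr_p)^p$ is essentially vacuous, so one must re-examine the mechanics of the lemma's proof rather than just its formal conclusion to extract that the inflation part of $\psi$ vanishes. The auxiliary fact that $H_2(G)_p = 0$ for groups with cyclic Sylow $p$-subgroup, while standard, should also be cited carefully.
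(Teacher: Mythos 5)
Your proposal is correct and follows essentially the same route as the paper's proof: the Universal Coefficient decomposition, elimination of $\mathrm{Hom}(H_2(G),\Cr_p)$ via the cyclic Sylow $p$-subgroup (the paper cites Karpilovsky, Theorem~2.1.5, for exactly the fact you derive by transfer--restriction), and the dichotomy on whether $p$ divides $|G/G'|$, with Lemma~\ref{PartialIGU} (in the case $r=1$) handling the nontrivial branch. Your closing caveat about reading $I(G,\Cr_p)^p$ at $k=p$ through the mechanics of the lemma's proof is a fair observation, but it agrees with how the paper itself invokes the lemma to conclude $I(G,\Cr_p)=B(G,\Cr_p)$.
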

\begin{proof}
Let $G$ be the indexing group of a cocyclic $\BH(n,p)$. Either $p$
divides $|G'|$ or Lemma~\ref{PartialIGU} applies, and thus
$I(G,\Cr_{p})=B(G,\Cr_p)$. Also $\mathrm{Hom}(H_2(G),\Cr_{p})=1$
by \cite[Theorem~2.1.5]{Karpilovsky}. \qedhere
\end{proof}

Proposition~\ref{CircppFourier} then yields
\begin{corollary}\label{CBHppisFourier}
A cocyclic $\BH(p,p)$ is equivalent to the Fourier matrix of order
$p$.
\end{corollary}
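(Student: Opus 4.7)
The statement is an easy two-step reduction, so the plan is essentially a chain of citations to the results just established.

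First I would observe that $n=p$ is trivially $p$-square-free, since the largest power of $p$ dividing $p$ is $p^1$. Therefore Corollary~\ref{pSquareFree} applies directly, and any cocyclic $\BH(p,p)$ is equivalent to a group-developed $\BH(p,p)$.

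Next, since the only group of order $p$ (up to isomorphism) is the cyclic group $\Cr_p$, a matrix group-developed over a group of order $p$ is (permutation equivalent to) a circulant. Thus the $\BH(p,p)$ produced in the previous step is a circulant Butson Hadamard matrix. I would then invoke Proposition~\ref{CircppFourier}, which asserts that every circulant $\BH(p,p)$ is equivalent to the Fourier matrix of order $p$, to finish the argument.

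There is no real obstacle; the entire content has already been done in Corollary~\ref{pSquareFree} and Proposition~\ref{CircppFourier}. The only thing one might pause over is verifying that ``group-developed over the unique group of order $p$'' is the same notion as ``circulant'' in the sense used by Proposition~\ref{CircppFourier}, but this is immediate from the definition $X\sim[h(xy)]_{x,y\in G}$ with $G=\Cr_p$.
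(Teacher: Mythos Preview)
Your proposal is correct and matches the paper's approach exactly: the paper simply writes ``Proposition~\ref{CircppFourier} then yields'' immediately after Corollary~\ref{pSquareFree}, leaving implicit the same two observations you spell out (that $n=p$ is $p$-square-free, and that group-developed over $\Cr_p$ means circulant).
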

\begin{remark}\label{ButsonNotRemark}
By Remark~\ref{GDExclusions} and Corollary~\ref{pSquareFree}, for
$(n,p)=(10,5)$ or $p=3$ and $n\in \abk \{ 6,24,30\}$, there are no
cocyclic $\BH(n,p)$ at all (so Butson's construction~\cite{Butson}
is not cocyclic). Furthermore, a cocyclic $\BH(12,3)$,
$\BH(21,3)$, $\BH(20,5)$, or $\BH(14,7)$ is equivalent to a
group-developed matrix.
\end{remark}

\subsection{Existence of cocyclic $\BH(n,p)$, $np\leq 100$}
\label{SomeNonExResults}

The table below summarizes existence of matrices in our
classification.
\begin{table}[h]
\centering
\begin{tabular}{|c|c|c|c|c|c|c|c|c|c|c|c|} \hline
$p$ $\backslash$ $\frac{n}{p}$ & \phantom{1}$1$\phantom{1} &
\phantom{1}$2$\phantom{1} & \phantom{1}$3$\phantom{1} &
\phantom{1}$4$\phantom{1} & \phantom{1}$5$\phantom{1} &
\phantom{1}$6$\phantom{1} & \phantom{1}$7$\phantom{1} &
\phantom{1}$8$\phantom{1} & \phantom{1}$9$\phantom{1} &
\phantom{1}$10$\phantom{1} & \phantom{1}$11$\phantom{1} \\ \hline
$3$ & F & NC & E& E & N & S$_2$ & S$_1$ & NC & E & NC & N \\
$5$ & F & NC & N & S$_1$ &  &   &   &   &   &   &   \\
$7$ & F & S$_1$ & &  &  &   &   &   &   &   &   \\
\hline
\end{tabular}
\label{TableA}\caption{Existence of $\BH(n,p)$}
\end{table}

\vspace{-10pt}

{\small \noindent
N: no Butson Hadamard matrices by Remark~\ref{deL84Cor}. \\
NC: no cocyclic Butson Hadamard matrices
by Remark~\ref{ButsonNotRemark}. \\
E: cocyclic Butson Hadamard matrices exist.
See Section~\ref{FullClassification}.\\
S$_1$: no cocyclic Butson Hadamard matrices according
to a relative difference set search. \\
S$_2$: no cocyclic Butson Hadamard matrices
according to an orthogonal cocycle search.\\
F: the Fourier matrix is the only Butson Hadamard matrix by
Proposition~\ref{HiraSmall} (or Corollary~\ref{CBHppisFourier}). }

\begin{remark} There are non-cocyclic $\BH(6,3)$ and
$\BH(10,5)$ by \cite{Butson}. Non-existence of cocyclic $\BH(6,3)$
is established by computer in \cite[Example 7.4.2]{Horadam}.
\end{remark}

We relied on computation of relative difference sets only for
parameter values that we could not settle otherwise. Nevertheless,
those calculations were not onerous. The search for a relative
difference set with parameters $(14, 7, 14, 2)$ ran in under an
hour; the test for an $\mathrm{RDS}(20, 5, 20, 4)$ took about a
day, with most of the time being spent on $\Cr_{100}$. We note
additionally that there are theoretical obstructions to the
existence of an $\mathrm{RDS}(21, 3, 21, 7)$: the system of
diophantine \textit{signature equations} that such a difference
set must satisfy does not admit a solution~\cite{RoederDiss}.

\section{The full classification}\label{FullClassification}

The only cases left to deal with are $(n,p)\! \in \! \{ (9,3),
(12,3), (27,3)\}$. In this section we discuss our complete and
irredundant classification of such $\BH(n,p)$.

Our overall task splits into two steps. We first compute a set of
cocyclic $\BH(n,p)$ containing representatives of every
equivalence class. Then we test equivalence of the matrices
produced. Since our method for the second step was given in
Section~\ref{EquivalenceTesting}, and the orders involved pose no
computational difficulties, we say nothing further about this
step. Two complementary methods were used for the first step:
checking shift orbits for orthogonal cocycles, and constructing
relative difference sets. See Subsections~\ref{CCocycles} and
\ref{ShiftAction}; also, we refer to \cite[Section~6]{POCRod},
which discusses a classification of cocyclic Hadamard matrices via
central relative difference sets. The algorithm for constructing
the difference sets in this paper is identical to the one there,
and was likewise carried out using M.~R\"{o}der's  \textsf{GAP}
package \textit{RDS}~\cite{RDS}.

\begin{example}\label{FullEnumeration}
Table~\ref{Table2} lists the number $t$ of orthogonal elements of
$Z(G,\Cr_3)$ for $|G|=9$ or $12$.

\begin{table}[H]
\centering \begin{tabular}{| c | c | c | c | c | c | c |c|} \hline
\phantom{1}$G$\phantom{1}  & \phantom{1}$\Cr_{9}$\phantom{1} &
\phantom{1}$\Cr_{3}^{2}$\phantom{1} &
\phantom{1}$\Cr_{12}$\phantom{1} & \phantom{1}$\Cr_{3} \rtimes
\Cr_{4}$\phantom{1} & \phantom{1}$\mathrm{Alt}(4)$\phantom{1} &
\phantom{1}$\D_{6}$\phantom{1} & \phantom{1}$\Cr_{2}^{2} \times
\Cr_{3}$\phantom{1}
\\ \hline
{\small $t$} & 18 & 144 & 0 & 288 & 48 & 0 & 96 \\
\hline
\end{tabular}
\caption{Counting orthogonal elements of $Z(G,\Cr_3)$}
\label{Table2}
\end{table}

\vspace{-5pt}

\noindent If $|G|\in \{6, 15, 18\}$ then $t=0$.
\end{example}

\subsection{$\BH(9,3)$.} There are precisely three equivalence
classes of cocyclic $\BH(9,3)$.

One class contains $\BH(3,3)\otimes \BH(3,3)$, which has indexing
group $\Cr_3^2$ and cocycle that is not a coboundary. Some
matrices $H_1$ in this class are group-developed over $\Cr_3^2$.
No $H_1$ has indexing group $\Cr_9$. See Examples~\ref{Drakeetc}
and \ref{DrakeetcContinued}.

Another equivalence class contains group-developed matrices with
indexing group $\Cr_9$. No matrix $H_2$ in this class has indexing
group $\Cr_3^2$; hence the cocycles of $H_2$ are all coboundaries
by Lemma~\ref{PartialIGU}. This class is not represented in
\cite{CHLibrary}, but happens to be an example of the construction
in \cite{deLCircGH} (cf.~\cite{Brock}). A representative is the
circulant with first row $( 1 , 1 , 1 , 1 , \zeta_3 ,\abk
\zeta_3^2 , 1 , \zeta_3^2 , \zeta_3 )$.

The third class contains matrices $H_3\approx H_2^*$ that are
cocyclic with indexing group $\Cr_9$. Again, $H_3$ is equivalent
to a circulant, does not have indexing group $\Cr_3^2$, all of its
cocycles are coboundaries, and  it is not in \cite{CHLibrary}.

By Proposition~\ref{LambdaEquivalenceImplies},
$\mathrm{PAut}(\mathcal{E}_{H_2}) \cong
\mathrm{PAut}(\mathcal{E}_{H_3})$, and this is solvable. We
described $\mathrm{PAut}(\mathcal{E}_{H_1})$ in
Example~\ref{Drakeetc}.

\subsection{$\BH(12,3)$.} Each cocyclic
$\BH(12, 3)$ is equivalent to a group-developed matrix
(Remark~\ref{ButsonNotRemark}) over one of $\Cr_{3} \rtimes \abk
\Cr_{4}$, $\Cr_{2}^{2} \rtimes \Cr_{3}$, or $\Cr_{2}^{2} \times
\Cr_{3}$. There are just two equivalence classes, which form a
Hermitian pair. The automorphism groups have order $864$.

This is the only order $n$ in our classification which is not a
prime power and for which cocyclic $\BH(n,p)$ exist.

\subsection{$\BH(27,3)$.} Predictably, order $27$ was the most
challenging one that we faced in our computations. An exhaustive
search for orthogonal cocycles was not possible, so this order was
classified by the central relative difference sets method.

There are sixteen equivalence classes of cocyclic $\BH(27, 3)$ in
total. Some are Kronecker products of cocyclic $\BH(9, 3)$ with
the unique $\BH(3,3)$, but the majority are not of this form. Each
matrix is equivalent to its transpose. There are two classes that
are self-equivalent under the Hermitian; the rest occur in
distinct Hermitian pairs.

Except for the generalized Sylvester matrix, whose automorphism
group as stated in Example~\ref{Drakeetc} is not solvable, the
automorphism group of a $\BH(27,3)$ has order $2^{a}3^{b}$.

Every non-cyclic group of order $27$ is an indexing group of at
least one $\BH(27, 3)$. There are no circulants.

\section{Concluding comments}\label{Concluding}

It is noteworthy that all matrices in our classification are
equivalent to group-developed ones (non-trivial cohomology classes
appear too). This may be compared with \cite{POCRod}, which
features many equivalence classes not containing group-developed
Hadamard matrices. Also, while there exist circulant $\BH(p^r,p)$
for all odd $p$ and $r\leq 2$~\cite{Brock,deLCircGH}, we have not
yet found a circulant $\BH(n,p)$ when $n$ is not a $p$-power.

A few composition results should be given. Let $\psi_i\in
Z(G_i,\Cr_k)$ for $i=\abk 1$, $2$, and define $\psi\in Z(G_1\times
G_2,\Cr_k)$ by $\psi((a,b),(x,y)) = \psi_1(a,x)\psi_2(b,y)$. It is
not hard to show that $\psi \in B(G_1\times G_2, \Cr_k)$ if and
only if $\psi_1$, $\psi_2$ are coboundaries.
\begin{lemma}\label{InevitableKronecker}
Suppose that $H_i$ is a cocyclic $\BH(n_i,k)$ with cocycle
$\psi_i$, $1\leq i \leq 2$. Then $H_1\otimes H_2$ is a cocyclic
$\BH(n_1n_2,k)$ with cocycle $\psi$.
\end{lemma}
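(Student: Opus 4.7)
The plan is to verify three things in sequence: that $\psi$ is a normalized cocycle on $G_1\times G_2$, that the matrix $M_\psi := [\psi((a,b),(x,y))]_{(a,b),(x,y)\in G_1\times G_2}$ coincides with the Kronecker product $M_{\psi_1}\otimes M_{\psi_2}$ under lexicographic indexing, and finally that $H_1\otimes H_2$ is equivalent to $M_\psi$ and is a Butson Hadamard matrix.

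First I would check the cocycle identity for $\psi$ by a direct componentwise calculation: expanding both sides of
\[
\psi((a,b),(x,y))\,\psi((ax,by),(c,d)) = \psi((a,b),(xc,yd))\,\psi((x,y),(c,d))
\]
reduces, after splitting each factor as a product of a $\psi_1$-value and a $\psi_2$-value, to the cocycle identities for $\psi_1$ and $\psi_2$ in the respective coordinates. Normalization of $\psi$ at $(1,1)\in G_1\times G_2$ is immediate from normalization of the $\psi_i$.

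Second, the $((a,b),(x,y))$ entry of $M_{\psi_1}\otimes M_{\psi_2}$ is by definition $\psi_1(a,x)\cdot\psi_2(b,y)$, which equals $\psi((a,b),(x,y))$; so with rows and columns of $M_\psi$ ordered lexicographically on $G_1\times G_2$ the two matrices are literally equal. Now, using $H_i\approx M_{\psi_i}$, write $H_i = P_iM_{\psi_i}Q_i$ for monomials $P_i,Q_i$ with entries in $\langle\zeta_k\rangle$. Then
\[
H_1\otimes H_2 = (P_1\otimes P_2)\,(M_{\psi_1}\otimes M_{\psi_2})\,(Q_1\otimes Q_2) = (P_1\otimes P_2)\,M_\psi\,(Q_1\otimes Q_2).
\]
The Kronecker product of monomial matrices is monomial, and $\langle\zeta_k\rangle$ is closed under products, so $P_1\otimes P_2$ and $Q_1\otimes Q_2$ implement a local equivalence from $M_\psi$ to $H_1\otimes H_2$. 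Combined with the standard mixed-product identity
\[
(H_1\otimes H_2)(H_1\otimes H_2)^* = (H_1H_1^*)\otimes(H_2H_2^*) = n_1n_2\,I_{n_1n_2},
\]
this shows $H_1\otimes H_2$ is a $\BH(n_1n_2,k)$ cocyclic with cocycle $\psi$.

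There is no real obstacle here; the only points requiring care are the bookkeeping of the lexicographic indexing so that $M_\psi = M_{\psi_1}\otimes M_{\psi_2}$ on the nose, and the verification that the tensor of two $\langle\zeta_k\rangle$-monomial matrices is again $\langle\zeta_k\rangle$-monomial, which is immediate.
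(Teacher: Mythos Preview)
Your argument is correct. The paper does not supply a proof of this lemma at all; it is stated without proof, the preceding sentence (``It is not hard to show\ldots'') signalling that the authors regard the whole passage as routine. Your verification---checking the cocycle identity componentwise, identifying $M_\psi$ with $M_{\psi_1}\otimes M_{\psi_2}$ under lexicographic indexing, and then transporting the equivalences $H_i\approx M_{\psi_i}$ through the Kronecker product---is exactly the straightforward computation the authors have in mind, so there is nothing to contrast.
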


\begin{corollary}\label{LikedeL92}
For $a\geq 1$, $b\geq a$, and $G \in \{ \Cr_{3} \rtimes \Cr_{4},
\Cr_{2}^{2} \rtimes \Cr_{3}, \Cr_{2}^{2} \times \Cr_{3} \}$, there
exists a group-developed $\BH(2^{2a}3^b,3)$ with indexing group
$G^a\times \Cr_{3}^{b-a}$.
\end{corollary}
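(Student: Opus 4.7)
The plan is to build the desired matrix as a Kronecker product of $a$ copies of a group-developed $\BH(12,3)$ with indexing group $G$ and $b-a$ copies of a group-developed $\BH(3,3)$ with indexing group $\Cr_3$. The orders multiply correctly: $12^{a}\cdot 3^{b-a}=(2^{2}\cdot 3)^{a}\cdot 3^{b-a}=2^{2a}3^{b}$, and the indexing group of the product is $G^{a}\times\Cr_{3}^{b-a}$, matching the statement.

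First, I would verify that the ingredients are available. For each of the three groups $G\in\{\Cr_{3}\rtimes\Cr_{4},\,\Cr_{2}^{2}\rtimes\Cr_{3},\,\Cr_{2}^{2}\times\Cr_{3}\}$, the $\BH(12,3)$ subsection explicitly records that there is a group-developed $\BH(12,3)$ with indexing group $G$. For the $\BH(3,3)$ factor, the Fourier matrix of order $3$ is, by Lemma~\ref{Vandermonde}, equivalent to a matrix group-developed over $\Cr_{3}$.

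Next I would assemble the tensor product and track its cocycle. Write each factor as a cocyclic $\BH$ whose cocycle is a coboundary (which is precisely what being group-developed means, in view of the discussion following Theorem~\ref{FamousCRDS}: a coboundary $\partial\phi$ makes $[\partial\phi(x,y)]$ equivalent to $[\phi(xy)]$, and conversely). Iterated application of Lemma~\ref{InevitableKronecker} then produces a cocyclic $\BH(2^{2a}3^{b},3)$ with indexing group $G^{a}\times\Cr_{3}^{b-a}$ whose cocycle is the tensor product cocycle $\psi((g_{1},\dots,g_{a+b-a}),(x_{1},\dots,x_{a+b-a}))=\prod_{i}\psi_{i}(g_{i},x_{i})$.

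Finally I would invoke the remark preceding Lemma~\ref{InevitableKronecker}: a tensor product cocycle lies in $B(G_{1}\times G_{2},\Cr_{k})$ if and only if each factor cocycle is a coboundary. Since all of our factor cocycles are coboundaries, so is $\psi$, and therefore the Kronecker product is equivalent to a group-developed matrix over $G^{a}\times\Cr_{3}^{b-a}$, as required. There is no real obstacle here; the only point that needs care is the correct bookkeeping of indexing groups in the Kronecker product and the observation that the coboundary property is preserved under tensor product, both of which are supplied by the cited lemma and its preceding remark.
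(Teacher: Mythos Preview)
Your proposal is correct and matches the paper's intended argument: the corollary is stated immediately after Lemma~\ref{InevitableKronecker} and the remark on coboundaries precisely so that one can tensor $a$ copies of a group-developed $\BH(12,3)$ over $G$ (supplied by the $\BH(12,3)$ subsection) with $b-a$ copies of the Fourier $\BH(3,3)$ over $\Cr_3$ (Lemma~\ref{Vandermonde}) and conclude that the product cocycle is again a coboundary. The only minor slip is the cross-reference: the equivalence between ``group-developed'' and ``cocycle in $B(G,\langle\zeta_k\rangle)$'' appears in Section~\ref{RehashCocyclic} before Theorem~\ref{FamousCRDS}, not after it.
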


Corollary~\ref{LikedeL92} was proved by de
Launey~\cite[Corollary~3.10]{deL92}, albeit only for indexing
groups $\Cr_2^{2a}\times \Cr_3^b$.

\subsubsection*{Acknowledgments}
R.~Egan received funding from the Irish Research Council
(Government of Ireland Postgraduate Scholarship). P. \'{O}
Cath\'{a}in was supported by Australian Research Council grant
DP120103067.

\vspace{5pt}

\end{document}